\newtheorem{Theorem}[equation]{Theorem}
\newtheorem{Lemma}[equation]{Lemma}
\newtheorem{Proposition}[equation]{Proposition}
\newtheorem{Corollary}[equation]{Corollary}
\newtheorem{assumption}[equation]{Assumption}
\newtheorem{remark}[equation]{Remark}
\numberwithin{equation}{section}
\newcommand{\meantmp}[2]{#1\langle{#2}#1\rangle}
\newcommand{\mean}[1]{\meantmp{}{#1}}
\newcommand{\Pidiv}{\ensuremath{\Pi^{\divergence}_h}}
\newcommand{\Rnn}{{\setR^{n \times n}}}
\newcommand{\para}{{\delta}}
\newcommand{\PiY}{\Pi^Y_h}
\newcommand{\parameter}{\delta}
\newcommand{\bfem}{\mbox{\bf e}^m}
\newcommand{\bfutm}{\mbox{\bf u}(t_m)}
\newcommand{\bfum}{\mbox{\bf u}^m}
\newcommand{\bfumh}{\mbox{\bf u}^m_h}
\newcommand{\ksum}{{k\sum_{m=0}^M}}
\newcommand{\ksumN}{{k\sum_{m=1}^N}}
\begin{document}
\title
{Optimal error estimate for semi-implicit space-time discretization
  for the equations describing incompressible generalized Newtonian
  fluids}
\author{Luigi C.\ Berselli\thanks{Dipartimento di
    Matematica, Universit{\`a} di Pisa, Via
    F.~Buonarroti 1/c, I-56127 Pisa, ITALY.
    ({berselli@dma.unipi.it})}
\and Lars Diening\footnotemark[2] 
\and Michael  R\r u\v zi\v cka{}\footnotemark[3] }\date{}
\maketitle
\renewcommand{\thefootnote}{\fnsymbol{footnote}} 
\footnotetext[2]{Institute of Mathematics, LMU Munich,
  Theresienstr.~39, D-80333, Munich, GERMANY,
  (diening@mathematik.uni-muenchen.de) }

\footnotetext[3]{Institute of Applied Mathematics,
  Albert-Ludwigs-University Freiburg, Eckerstr.~1, D-79104 Freiburg,
  GERMANY. (rose@mathematik.uni-freiburg.de)}

\begin{abstract}
  In this paper we study the numerical error arising in the space-time
  approximation of unsteady generalized Newtonian fluids which possess
  a stress-tensor with $(p.\delta)$-structure.  A semi-implicit
  time-discretization scheme coupled with conforming inf-sup stable finite
  element space discretization is analyzed. The main result, which improves
  previous suboptimal estimates as those in [A.~Prohl, and
  M.~\Ruzicka, \textit{SIAM J.  Numer. Anal.,} 39 (2001),
  pp.~214--249] is the optimal $\mathcal{O}(k+h)$ error-estimate valid
  in the range $p\in (3/2,2]$, where $k$ and $h$ are the time-step and
  the mesh-size, respectively.  Our results hold in three-dimensional
  domains (with periodic boundary conditions) and are uniform with
  respect to the degeneracy parameter $\para \in [0,\delta_0]$ of the
  extra stress tensor.
\end{abstract}
\begin{keywords} 
  Non-Newtonian fluids, shear dependent viscosity, fully discrete
  problem, error analysis.
\end{keywords}
\begin{AMS}
  76A05, 
35Q35, 
65M15, 
65M60.  
\end{AMS}
\date{\small \today}

\maketitle
\section{Introduction}
We study the (full) space-time discretization of a homogeneous (for
simplicity the density $\rho$ is set equal to $1$), unsteady, and
incompressible fluid with shear-dependent viscosity, governed by the
following system of partial differential equations
\begin{equation}
  \label{eq:pfluid}\tag{$\text{NS}_p$}
  \begin{aligned}
    \bfu_t-\divo \bfS(\bfD\bfu)+ [\nabla \bfu] 
    \bfu+\nabla\pi&= \bff\qquad&&\text{in } I\times \Omega,
    \\
    \divo\bfu&=0\qquad&&\text{in } I\times \Omega, 
    \\
    \bfu(0)&=\bfu_0\qquad&&\text{in }  \Omega,
  \end{aligned}
\end{equation}
where the vector field $\bfu=(u_1,u_2,u_3)$ is the velocity, $\bfS$ is
the extra stress tensor, the scalar $\pi$ is the kinematic pressure,
the vector $\bff=(f_1,f_2,f_3)$ is the external body force, and
$\bfu_0$ is the initial velocity. We use the notation $([\nabla
\bfu]\bfu)_i = \sum_{j=1}^3 u_j \partial_j u_i$, ${i=1,2,3}$, for the
convective term, while $\bfD \bfu := \tfrac 12 (\nabla\bfu+\nabla
\bfu^\top )$ denotes the symmetric part of the gradient $\nabla \bfu$.
Throughout the paper we shall assume that
$\Omega=(0,2\pi)^3\subset\setR^3$ and we endow the problem with space
periodic boundary conditions. As explained in~\cite{bdr-7-5-num,bdr-7-5},
this assumption simplifies the problem, allows us to prove suitable
regularity results for both the continuous and the time-discrete
problems, so we can concentrate on the difficulties that
arise from the structure of the extra stress tensor.  As usual
$I=[0,T]$ denotes some non-vanishing time interval.

The most standard example of power-law like extra stress tensors in the
class under consideration (cf.~Assumption~\ref{ass:1}) is, for
$p\in(1,\infty)$,
\begin{equation*}
  \bfS(\bfD \bfu) =\mu\, (\para+\abs{\bfD\bfu})^{p-2}\bfD\bfu,
\end{equation*} 
where $\mu>0$ and $\para\geq0$ are given constants. The literature on
this subject is very extensive (cf.~\cite{bdr-phi-stokes, bdr-7-5-num}
and the discussion therein). Based on the results
in~\cite{bdr-phi-stokes,bdr-7-5-num} we find here a suitable setting
for the choice of the finite-element space-discretization and for the
semi-implicit Euler scheme for time advancing, in order to show a 
convergence result, which is optimal apart from the $h$-$k$ coupling.
Previous results in this direction have been proved in~\cite{pr} even
if the lack of available precise regularity results lead to
non-optimal results. In fact an error
$\mathcal{O}(h+k)^{\frac{5p-6}{2p}}$ for
$p\in\big]\frac{3+\sqrt{29}}{5},2[$ was obtained in the space-periodic
three-dimensional case in~\cite{pr} in the case of conforming and
non-conforming finite elements.

Here, we combine the optimal estimates for the time-discretization
from~\cite{bdr-7-5-num} with those for the stationary problem (without
convective term) from~\cite{bdr-phi-stokes}, and also the results for
parabolic systems in~\cite{der}, to produce the optimal
$\mathcal{O}(k+h)$ order of convergence for a natural distance, see
Theorem~\ref{thm:main_thm} for the precise statement of the result.

\vspace{.2cm}

\textbf{Plan of the paper:} In Section~\ref{sec:preliminaries} we
introduce the notation, the main hypotheses on the stress-tensor, and
the properties of the numerical methods we consider. We also recall
some technical results from previous papers which we will need later
on. The proof of the main estimate on the numerical error is then
postponed to Section~\ref{sec:proof-main-result}.
\section{Notation and preliminaries}
\label{sec:preliminaries}
In this section we introduce the notation we will use and we 
also recall some technical results which will be needed in the proof
of the main convergence result.
\subsection{Function spaces}
\label{sec:notation} 
We use $c, C$ to denote generic constants, which may change from line
to line, but are not depending on the crucial quantities. Moreover we
write $f\sim g$ if and only if there exists constants $c,C>0$ such
that $c\, f \le g\le C\, f$. Given a normed space $X$ we denote its
topological dual space by $X^*$. We denote by $\abs{M}$ the
$n$-dimensional Lebesgue measure of a measurable set $M$. The mean
value of a locally integrable function $f$ over a measurable set $M
\subset \Omega$ is denoted by $\mean{f}_M:= \dashint_M f \, dx =\frac
1 {|M|}\int_M f \, dx$. Moreover, we use the notation
$\skp{f}{g}:=\int_\Omega f g\, dx$, whenever the right-hand side is
well defined.

We will use the customary Lebesgue spaces $L^p(\Omega)$ and Sobolev
spaces $W^{k,p}(\Omega)$, where $\Omega =(0,2\pi)^3$ and
periodic conditions are enforced. As usual $p':=\frac{p}{p-1}$. In
addition, $W^{k,p}_{\divo}(\Omega)$ denotes the subspace of (vector
valued) functions with vanishing divergence. We will denote by
$\norm{\,.\,}_p$ the norm in $L^p(\Omega)$ and, in the case of zero
mean value, we equip $W^{1,p}(\Omega)$ (based on the \Poincare{}
Lemma) with the gradient norm $\norm{\nabla \,.\,}_p$.

For the time-discretization, given $T>0$ and $M\in \setN$, we define
the time-step size as $k:=T/M>0$, with the corresponding net
$I^M:=\{t_m\}_{m=0}^M$, $t_m:=m\,k$, and we define the
finite-differences backward approximation for the time derivative as:
\begin{equation*}
  d_t \bu^m:=\frac{\bu^m-\bu^{m-1}}{k}.
\end{equation*}
To deal with discrete problems we shall use the discrete spaces
$l^p(I^M;X)$ consisting of $X$-valued sequences $\{a_m\}_{m=0}^M$,
endowed with the norm
\begin{equation*}
  \|a_m\|_{l^p(I^M;X)}:=
  \left\{\begin{aligned}
      &\left(\ksum\|a_m\|_X^p\right)^{1/p}\quad &&\text{if }1\leq
      p<\infty\,,
      \\
      &\max_{0\leq m\leq M}\|a_m\|_X &&\text{if }p=\infty\,.
    \end{aligned}
  \right.
\end{equation*}

For the space discretization, $\mathcal{T}_h$ denotes a family of
shape-regular, conformal triangulations, consisting of
three-dimensional simplices $K$. We denote by $h_K$ the diameter of
$K$ and by $\rho_K$ the supremum of the diameters of inscribed balls.
We assume that $\mathcal{T}_h$ is non-degenerate, i.e., $\max _{K \in
  \mathcal{T}_h} \frac {h_k}{\rho_K}\le \gamma_0$.  The global
mesh-size $h$ is defined by $h:=\max _{K \in \mathcal T_h}h_K$.  Let
$S_K$ denote the neighborhood of~$K$, i.e., $S_K$ is the union of all
simplices of~$\mathcal{T}$ touching~$K$.  By the assumptions we obtain
that $|S_K|\sim |K|$ and that the number of patches $S_K$ to which a
simplex belongs is bounded uniformly with respect to $h$ and $K$.

The function spaces which we will use are the following 
\begin{alignat*}{2}
  X &:= \big(W^{1,p}(\Omega)\big)^n, \qquad & V &:=
  \bigg\{\bu\in X :\ \dashint_\Omega    \bu\,dx=0 \bigg \},
  \\
  Y &:= L^{p'}(\Omega)\,,\qquad & Q &:=
  L^{p'}_0(\Omega):=\biggset{f\in Y\,:\, \dashint_\Omega
    f\,dx=0 }.
\end{alignat*}
In the finite element analysis, we denote by ${\mathcal
  P}_m(\mathcal{T}_h)$,  $m \in \setN_0$, the space of scalar or
vector-valued continuous functions, which are polynomials of degree at
most $m$ on each simplex $K \in \mathcal{T}_h$.  Given a triangulation
of $\Omega$ with the above properties and given $k,m \in \setN$ we
denote by $X_h \subset {\mathcal P}_m(\mathcal{T}_h)$ and $Y_h\subset
{\mathcal P}_k(\mathcal{T}_h)$ appropriate conforming finite element
spaces defined on $\mathcal{T}_h$, i.e., $X_h$, $Y_h$ satisfy $X_h
\subset X$ and $Y_h \subset Y$. Moreover, we set $V_h := X_h \cap V$
and $Q_h:= Y_h \cap Q$, while $\skp{f}{g}_h:= \sum_{K \in
  \mathcal{T}_h} \int_K{f}{g}\, dx$ denotes the inner
product in the appropriate spaces.

For the error estimates it is crucial to have projection operators
well-behaving in terms of the natural norms. As in~\cite{bdr-phi-stokes} we
make the following assumptions on the projection operators associated
with these spaces.
\begin{assumption}
  \label{ass:proj-div}
  We assume that $\mathcal{P}_1(\mathcal{T}_h) \subset X_h$ and there
  exists a linear projection operator $\Pidiv\,:\, X \to X_h$ which
  \begin{enumerate}
  \item preserves divergence in the $Y_h^*$-sense, i.e.,
    \begin{align*}
      \skp{\divergence \bfw}{\eta_h} &= \skp{\divergence \Pidiv
        \bfw}{\eta_h} \qquad \forall\, \bfw \in X,\; \forall\, \eta_h \in
      Y_h\,;
    \end{align*}
  \item preserves periodic conditions, i.e. $\Pidiv(X) \subset X_h$;
  \item is locally $W^{1,1}$-continuous in the sense that
    \begin{align*}
      \dashint_K \abs{\Pidiv\bfw}\,dx &\leq c \dashint_{S_K}\!
      \abs{\bfw}\,dx + c \dashint_{S_K}\! h_K \abs{\nabla \bfw}\,dx
      \quad \forall\, \bfw \in X,\; \forall\, K \in \mathcal{T}_h.
    \end{align*}
  \end{enumerate}
\end{assumption}
%
%
\begin{assumption}
  \label{ass:PiY}
  We assume that $Y_h$ contains the constant functions, i.e. that
  $\setR \subset Y_h$, and that there exists a linear projection
  operator $\PiY\,:\, Y \to Y_h$ which is locally $L^1$-continuous in
  the sense that
  \begin{align*}
    \dashint_K \abs{\PiY q}\,dx &\leq c\, \dashint_{S_K} \abs{q}\,dx
    \qquad \forall\, q \in Y,\; \forall\, K \in \mathcal{T}_h.
  \end{align*}
\end{assumption}%

For a discussion and consequences of these assumptions  we refer
to~\cite{bdr-phi-stokes}.  In particular we will need the following results:

\begin{Proposition}
  \label{thm:ocont}
  Let $r \in (1,\infty)$ and let $\Pidiv$ satisfy
  Assumption~\ref{ass:proj-div}. Then $\Pidiv$ has the following local
  continuity property
  \begin{align*}
    \int_K  \abs{\nabla \Pidiv \bfw}^r \,dx
    \leq c\, \int_{S_K} \abs{\nabla \bfw}^r
    \,dx
  \end{align*}
  and the following local approximation property
  \begin{align*}
    \int_K \abs{\bfw -\Pidiv \bfw}^r \,dx \leq c\, \int_{S_K}
    h_K^r \abs{\nabla \bfw}^r\,dx,
  \end{align*}
  for all $K \in \mathcal{T}_h$ and $\bfw \in (W^{1,r}(\Omega))^n$.
  The constant $c$ depends only on $r$ and on the
  non-degeneracy constant $\gamma_0$ of the triangulation
  $\mathcal{T}_h$.
\end{Proposition}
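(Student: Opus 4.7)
The plan is to upgrade the hypothesis-level $L^1$-type bound on $\Pidiv$ to an $L^r$-type bound on $\Pidiv$ and $\nabla \Pidiv$, and then to obtain the approximation estimate by subtracting a constant and invoking Poincaré on $S_K$. The three ingredients that do the work are: equivalence of norms on the finite-dimensional polynomial space $\mathcal{P}_m(K)$, an inverse estimate on the simplex $K$, and the fact that $\Pidiv$ fixes constants (a consequence of $\mathcal{P}_1(\mathcal{T}_h) \subset X_h$ together with $\Pidiv$ being a projection).

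First, I would establish a local $L^r$-continuity for $\Pidiv$ itself. Given $\bfw \in X$, the function $\Pidiv \bfw$ is a polynomial of fixed degree on $K$, so by equivalence of norms on this finite-dimensional space (combined with scaling on a shape-regular simplex), one has $\bigl(\dashint_K |\Pidiv \bfw|^r \, dx\bigr)^{1/r} \leq c \, \dashint_K |\Pidiv \bfw|\, dx$. Plugging in the $W^{1,1}$-continuity of Assumption~\ref{ass:proj-div} on the right, followed by Jensen's inequality on $S_K$, produces
\begin{equation*}
\dashint_K |\Pidiv \bfw|^r \, dx \leq c \dashint_{S_K} |\bfw|^r\, dx + c \, h_K^r \dashint_{S_K} |\nabla \bfw|^r\, dx.
\end{equation*}

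Next, since $\mathcal{P}_1(\mathcal{T}_h) \subset X_h$ and $\Pidiv$ is a linear projection, $\Pidiv \bfq = \bfq$ for every constant $\bfq$. Choose $\bfq := \mean{\bfw}_{S_K}$; then $\bfw - \Pidiv \bfw = (\bfw - \bfq) - \Pidiv(\bfw-\bfq)$, and applying the previous bound to $\bfw-\bfq$ (whose gradient equals $\nabla \bfw$) gives
\begin{equation*}
\dashint_K |\bfw - \Pidiv \bfw|^r\, dx \leq c \dashint_{S_K} |\bfw-\mean{\bfw}_{S_K}|^r\, dx + c\, h_K^r \dashint_{S_K} |\nabla \bfw|^r\, dx.
\end{equation*}
The first term is controlled by the Poincaré inequality on $S_K$; since the diameter of $S_K$ is comparable to $h_K$ by shape regularity and $|S_K| \sim |K|$, this yields the approximation estimate after multiplying by $|K|$.

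Finally, for the gradient estimate I would write $\nabla \Pidiv \bfw = \nabla \Pidiv(\bfw - \bfq)$ with the same $\bfq$, and apply the standard inverse inequality on the polynomial space $\mathcal{P}_m(K)$, namely $\|\nabla p\|_{L^r(K)} \le c\, h_K^{-1} \|p\|_{L^r(K)}$. Using the $L^r$-continuity above applied to $\bfw - \bfq$ together with Poincaré on $S_K$ gives $\int_K |\Pidiv(\bfw-\bfq)|^r\, dx \leq c\, h_K^r \int_{S_K} |\nabla \bfw|^r\, dx$, and the inverse estimate absorbs the $h_K^r$ factor, producing the claim. There is no real obstacle here; the only point to be careful about is that the constants in the norm equivalence on $\mathcal{P}_m(K)$ and in the inverse inequality are scale-invariant once we work with shape-regular simplices, so the $h_K$-powers track correctly and ultimately depend only on $r$ and $\gamma_0$.
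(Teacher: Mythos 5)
Your proof is correct. The paper itself does not give an argument here: it simply cites Theorem~3.5 of the companion paper \cite{bdr-phi-stokes}, where this stability/approximation upgrade is established. Your self-contained proof follows what is essentially the standard route for such results in the FEM literature (and, in all likelihood, the route taken in the cited reference): use equivalence of $L^1$ and $L^r$ norms on the fixed-degree polynomial space $\mathcal{P}_m(K)$ with the correct affine scaling to promote the hypothesized local $W^{1,1}$-bound to a local $L^r$-bound, exploit that $\Pidiv$ reproduces constants (since $\mathcal{P}_1(\mathcal{T}_h)\subset X_h$ and $\Pidiv|_{X_h}=\mathrm{id}$) to subtract $\mean{\bfw}_{S_K}$, invoke a Poincar\'e inequality on the patch $S_K$ with constant $\sim h_K$ to obtain the approximation estimate, and finish with an inverse estimate on $K$ for the gradient bound. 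The two points you gloss over that do carry real (but standard) content are: that the Poincar\'e constant on $S_K$ is uniformly of size $h_K$ (a consequence of shape regularity and bounded patch overlap, since $S_K$ is star-shaped with respect to a ball of radius $\sim h_K$), and that constants actually belong to $X$ so that $\Pidiv$ may be applied to them (true here because the setting is space-periodic rather than homogeneous Dirichlet). Both are fine in this paper's framework, so the argument is sound and the constants indeed depend only on $r$ and $\gamma_0$.
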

\begin{proof}
  This is special case of  Thm.~3.5 in \cite{bdr-phi-stokes}. 
\end{proof}

\begin{Proposition}
  \label{lem:PiYstab}
  Let $r \in (1,\infty)$ and let $\PiY$ satisfy
  Assumption~\ref{ass:PiY}.  Then for all $K \in \mathcal{T}_h$ and $q
  \in L^r(\Omega)$ we have
  \begin{align*}
    \int_K  \abs{\PiY q}^r \,dx &\leq c\, \int_{S_K} 
    \abs{q}^r\,dx.
  \end{align*}
  Moreover, for all $K \in \mathcal{T}_h$ and $q \in W^{1,r}(\Omega)$
  we have
  \begin{align*}
    \int_K \abs{q- \PiY q}^r\,dx &\leq c\, \int_{S_K}
     h_K^r \abs{\nabla q}^r \,dx.
  \end{align*}
  The constants depend only on $r$ and on $\gamma_0$.
\end{Proposition}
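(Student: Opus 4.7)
The plan is to prove the two bounds in order, first establishing local $L^r$-stability from the given $L^1$-continuity (Assumption~\ref{ass:PiY}) via a norm-equivalence on finite-dimensional polynomial spaces, and then deducing the approximation estimate by a standard bootstrap through Poincaré's inequality, exploiting that $\PiY$ fixes constants.

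For the stability estimate, I would observe that $\PiY q$ restricted to $K$ lies in the finite-dimensional space $\mathcal{P}_k(K)$ of polynomials of degree at most $k$ on the single simplex $K$. On such a finite-dimensional space all $L^s$-seminorms are equivalent, and by rescaling to a reference simplex I expect the estimate
\begin{equation*}
  \biggl(\dashint_K \abs{\PiY q}^r\,dx\biggr)^{1/r}
  \leq c\, \dashint_K \abs{\PiY q}\,dx,
\end{equation*}
with a constant $c$ depending only on $r$, $k$, and the non-degeneracy constant~$\gamma_0$. Combining this with the $L^1$-continuity from Assumption~\ref{ass:PiY} and then Jensen's inequality gives
\begin{equation*}
  \dashint_K \abs{\PiY q}^r\,dx
  \leq c\, \biggl(\dashint_{S_K}\abs{q}\,dx\biggr)^r
  \leq c\, \dashint_{S_K}\abs{q}^r\,dx,
\end{equation*}
and multiplying through by $\abs{K}$ and using $\abs{S_K} \sim \abs{K}$ yields the first claim.

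For the approximation estimate I would use that $\PiY$ is a projection fixing constants: since $\setR \subset Y_h$ and $\PiY$ maps into $Y_h$ with $\PiY\circ\PiY = \PiY$, any constant $c\in\setR$ satisfies $\PiY c = c$. Thus for any $K$,
\begin{equation*}
  q - \PiY q = (q - \mean{q}_{S_K}) - \PiY\bigl(q - \mean{q}_{S_K}\bigr),
\end{equation*}
so by the triangle inequality and the stability estimate just proved,
\begin{equation*}
  \int_K \abs{q-\PiY q}^r\,dx
  \leq c \int_{S_K}\abs{q - \mean{q}_{S_K}}^r\,dx.
\end{equation*}
Applying the Poincaré inequality on the patch $S_K$, whose diameter is comparable to $h_K$ by shape-regularity, gives the required bound $c\int_{S_K} h_K^r \abs{\nabla q}^r\,dx$.

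The main obstacle is the first step, namely the inverse-type equivalence of $L^r$ and $L^1$ norms for polynomials on $K$ with the right scaling in $|K|$: here one must exploit that the polynomial degree in $Y_h$ is uniformly bounded and use a standard scaling argument to the reference simplex, controlling the Jacobian factors via the non-degeneracy assumption $\max_K h_K/\rho_K \leq \gamma_0$. The application of Poincaré on $S_K$ also needs a brief justification, since $S_K$ is a union of simplices rather than a convex set, but shape-regularity ensures that $S_K$ is a John domain with constants depending only on $\gamma_0$, so the standard inequality applies.
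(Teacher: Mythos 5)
The paper's ``proof'' of this proposition is a one-line citation to Lemma~5.3 of \cite{bdr-phi-stokes}, so there is no in-text argument to compare against directly. Your proof is correct and is precisely the standard argument behind such stability/approximation estimates, almost certainly the same as in the cited reference: (i) the inverse inequality $\bigl(\dashint_K |p|^r\bigr)^{1/r}\leq c\,\dashint_K |p|$ for polynomials of bounded degree on a simplex follows by the $L^r$--$L^1$ equivalence on the reference element plus affine scaling (here only the volume scaling of the Jacobian is used, so $\gamma_0$ does not actually enter this step); combined with Assumption~\ref{ass:PiY} and Jensen on $S_K$, then using $|S_K|\sim|K|$, this gives the first bound. (ii) The approximation bound follows from $\PiY$ fixing constants (consequence of $\setR\subset Y_h$ and $\PiY$ being a projection onto $Y_h$), the decomposition through $q-\mean{q}_{S_K}$, the stability just proved, and Poincar\'e on $S_K$ with $\operatorname{diam}(S_K)\sim h_K$. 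Your remark that $S_K$ is not convex but is a John (chain-connected) domain with constants controlled by $\gamma_0$ is exactly the required justification. No gaps.
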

\begin{proof}
  This is special case of  Lemma~5.3 in \cite{bdr-phi-stokes}. 
\end{proof}

\begin{remark}\label{rem:int}
  {\rm
    By summing over $K \in \mathcal T_h$ one can easily get global
    analogues of the statements in the above Propositions.
  }
\end{remark}

As usual, to have a stable space-discretization, we use the following
tri-linear form in the weak formulation of (space) discrete problems
\begin{equation*}
  b(\bu,\bv,\bw):=
  \frac{1}{2}\big[\skp{[\nabla\bv]\bu}{\bw}_h-\skp{[\nabla\bw]\bu}{\bv}_h\big],
\end{equation*}
observing that for periodic divergence-free functions (in the
continuous sense) it holds $b(\bu,\bv,\bw):=
\skp{[\nabla\bv]\bu}{\bw}$.
\subsection{Basic properties of the extra stress tensor} 
\label{sec:stress_tensor}
For a second-order tensor $\bfA \in \setR^{n \times n} $ we denote its
symmetric part by $\bA^\sym:= \frac 12 (\bA +\bA^\top) \in \setR_\sym
^{n \times n}:= \set {\bfA \in \setR^{n \times n} \,|\, \bA
  =\bA^\top}$. The scalar product between two tensors $\bA, \bB$ is
denoted by $\bA\cdot \bB$, and we use the notation $\abs{\bA}^2=\bA
\cdot \bA ^\top$. We assume that the extra stress tensor $\bS$ has
$(p,\para)$-structure, which will be defined now.  A detailed
discussion and full proofs of the following results can be found
in~\cite{DieE08,dr-nafsa}.
\begin{assumption}
  \label{ass:1}
  We assume that the extra stress tensor $\bS\colon \setR^{n \times n}
  \to \setR^{n \times n}_\sym $ belongs to $C^0(\setR^{n \times
    n},\setR^{n \times n}_\sym )\cap C^1(\setR^{n \times n}\setminus
  \{\bfzero\}, \setR^{n \times n}_\sym ) $, satisfies $\bS(\bA) =
  \bS\big (\bA^\sym \big )$, and $\bS(\mathbf 0)=\mathbf 0$. Moreover,
  we assume that the tensor $\bS$ has {\rm $(p,\para)$-structure},
  i.e., there exist $p \in (1, \infty)$, $\para\in [0,\infty)$, and
  constants $C_0, C_1 >0$ such that
   \begin{subequations}
     \label{eq:ass_S}
     \begin{align}
       \sum\nolimits_{i,j,k,l=1}^n \partial_{kl} S_{ij} (\bA) C_{ij}C_{kl} &\ge C_0 \big
       (\para +|\bA^\sym|\big )^{{p-2}} |\bC^\sym |^2,\label{1.4b}
       \\
       \big |\partial_{kl} S_{ij}({\bA})\big | &\le C_1 \big (\para
       +|\bA^\sym|\big )^{{p-2}},\label{1.5b}
     \end{align}
   \end{subequations}
   are satisfied for all $\bA,\bC \in \setR^{n\times n} $ with
   $\bA^\sym \neq \bfzero$ and all $i,j,k,l=1,\ldots, n$.  The
   constants $C_0$, $C_1$, and $p$ are called the {\em
     characteristics} of $\bfS$.
\end{assumption}
\begin{remark}
  {\rm 
    We would like to emphasize that, if not otherwise 
    stated, the constants in the paper depend only on the
    characteristics of $\bfS$ but are independent of $\delta\geq 0$. 
  }
\end{remark}

Another important set of tools are the {\rm shifted N-functions}
$\set{\phi_a}_{a \ge 0}$, cf.~\cite{DieE08,DieK08,dr-nafsa}. To this
end we define for $t\geq0 $ a special N-function $\phi$ by
\begin{align} 
  \label{eq:5} 
  \varphi(t):= \int _0^t \varphi'(s)\, ds\qquad\text{with}\quad
  \varphi'(t) := (\delta +t)^{p-2} t\,.
\end{align}
Thus we can replace in the right-hand side of~\eqref{eq:ass_S} the
expression $C_i \big
(\para +|\bA^\sym|\big )^{{p-2}} $ by $\widetilde C_i\,\varphi'' \big
(|\bA^\sym|\big )$, $i=0,1$. Next, the shifted functions are defined
for $t\geq0$ by
\begin{align*}
  \varphi_a(t):= \int _0^t \varphi_a'(s)\, ds\qquad\text{with }\quad
  \phi'_a(t):=\phi'(a+t)\frac {t}{a+t}.
\end{align*}
For the $(p,\delta)$-structure we have that $\phi_a(t) \sim
(\delta+a+t)^{p-2} t^2$ and also $(\phi_a)^*(t) \sim ((\delta+a)^{p-1}
+ t)^{p'-2} t^2$, where the $*$-superscript denotes the complementary
function\footnote[2]{For a N-function $\psi $ the complementary
  N-function $\psi^*$ is defined by $\psi^*(t):= \sup_{s \ge
    0}(st-\psi(s))$.}. We will use also Young's inequality: For all
$\varepsilon >0$ there exists $c_\epsilon>0 $, such that for all
$s,t,a\geq 0$ it holds
\begin{align}
  \label{eq:ineq:young}
  \begin{split}
    ts&\leq \epsilon \, \phi_a(t)+ c_\epsilon \,(\phi_a)^*(s)\,,
    \\
    t\, \phi_a'(s) + \phi_a'(t)\, s &\le \epsilon \, \phi_a(t)+ c_\epsilon
    \,\phi_a(s).
  \end{split}
\end{align}

Closely related to the extra stress tensor $\bS$ with
$(p,\delta)$-structure is the function $\bF\colon\setR^{n \times n}
\to \setR^{n \times n}_\sym$ defined through
\begin{align}
  \label{eq:def_F}
  \bF(\bA):= \big (\para+\abs{\bA^\sym} \big )^{\frac
    {p-2}{2}}{\bA^\sym } \,.
\end{align}
The main calculations of the paper can be performed by recalling the
following lemma, which establishes the connection between $\bfS$,
$\bfF$, and $\set{\phi_a}_{a \geq 0}$ (cf.~\cite{DieE08,dr-nafsa}).
\begin{Lemma}
  \label{lem:hammer}
  Let $\bfS$ satisfy Assumption~\ref{ass:1}, let $\phi$ be defined
  in~\eqref{eq:5}, and let $\bfF$ be defined in~\eqref{eq:def_F}.
  Then
  \begin{subequations}
    \label{eq:hammer}
    \begin{align}
      \label{eq:hammera}
      \big({\bfS}(\bfP) - {\bfS}(\bfQ)\big) \cdot \big(\bfP-\bfQ \big)
      &\sim \bigabs{ \bfF(\bfP) - \bfF(\bfQ)}^2
      \\
      \label{eq:hammerb}
      &\sim \phi_{\abs{\bfP^\sym}}(\abs{\bfP^\sym - \bfQ^\sym})
      \\
      \label{eq:hammerc}
      &\sim \phi''\big( \abs{\bfP^\sym} + \abs{\bfQ^\sym}
      \big)\abs{\bfP^\sym - \bfQ^\sym}^2
      \\
      \intertext{uniformly in $\bfP, \bfQ \in \setR^{n \times n}$.
        Moreover,  uniformly in $\bfQ \in \setR^{n \times n}$,} 
      \label{eq:hammerd}
      \bfS(\bfQ) \cdot \bfQ \sim \abs{\bfF(\bfQ)}^2 &\sim
      \phi(\abs{\bfQ^\sym}).
    \end{align}
  \end{subequations}
  The constants depend only on the characteristics of $\bfS$.
\end{Lemma}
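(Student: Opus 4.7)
The plan is to reduce everything to a single line-segment integral representation and then invoke a standard integral comparison that appears repeatedly in the N-function literature. Since $\bfS(\bfA) = \bfS(\bfA^\sym)$ and $\bfF(\bfA) = \bfF(\bfA^\sym)$, I may assume from the outset that $\bfP = \bfP^\sym$ and $\bfQ = \bfQ^\sym$; the general case follows by replacing $\bfP, \bfQ$ with their symmetric parts on both sides of each equivalence. By the fundamental theorem of calculus applied along the segment $[\bfQ, \bfP]$,
\begin{equation*}
  (\bfS(\bfP) - \bfS(\bfQ))\cdot(\bfP-\bfQ) = \int_0^1 \partial_{kl} S_{ij}(\bfQ + \theta(\bfP-\bfQ))\,(\bfP-\bfQ)_{ij}(\bfP-\bfQ)_{kl}\,d\theta,
\end{equation*}
and the two-sided bound \eqref{eq:ass_S} turns this into
\begin{equation*}
  (\bfS(\bfP) - \bfS(\bfQ))\cdot(\bfP-\bfQ) \sim \abs{\bfP-\bfQ}^2 \int_0^1 \big(\delta + \abs{\bfQ + \theta(\bfP-\bfQ)}\big)^{p-2}\,d\theta.
\end{equation*}
An analogous argument applied to $\bfF$, using $\nabla \bfF(\bfA) \sim (\delta + |\bfA|)^{(p-2)/2}$ (a direct computation from \eqref{eq:def_F}), yields
\begin{equation*}
  \abs{\bfF(\bfP)-\bfF(\bfQ)}^2 \sim \abs{\bfP-\bfQ}^2 \int_0^1 \big(\delta + \abs{\bfQ + \theta(\bfP-\bfQ)}\big)^{p-2}\,d\theta,
\end{equation*}
which already gives \eqref{eq:hammera}.

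The crucial technical step, which I expect to be the main obstacle, is the integral comparison
\begin{equation*}
  \int_0^1 \big(\delta + \abs{\bfQ + \theta(\bfP-\bfQ)}\big)^{p-2}\,d\theta \;\sim\; \big(\delta + \abs{\bfP} + \abs{\bfQ}\big)^{p-2}.
\end{equation*}
The upper bound is straightforward when $p \ge 2$ and follows by convexity when $p < 2$ after splitting the segment near its midpoint. The lower bound is the more delicate one in the case $p < 2$: one restricts the integral to the sub-interval where $|\bfQ + \theta(\bfP-\bfQ)|$ is comparable to $|\bfP|+|\bfQ|$ (which has length bounded below thanks to the triangle inequality) and uses monotonicity of $s \mapsto (\delta+s)^{p-2}$. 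This is exactly the calculation carried out in \cite{DieE08,dr-nafsa}.

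With this comparison in hand, equations \eqref{eq:hammerb} and \eqref{eq:hammerc} follow from the pointwise equivalences $\phi''(t) \sim (\delta+t)^{p-2}$ and $\phi_a(t) \sim (\delta + a + t)^{p-2} t^2$ stated after the definition of $\phi_a$: setting $a = \abs{\bfP^\sym}$ and $t = \abs{\bfP^\sym-\bfQ^\sym}$ (and noting $a + t \sim \abs{\bfP}+\abs{\bfQ}$ by the triangle inequality) gives
\begin{equation*}
  \phi_{\abs{\bfP^\sym}}(\abs{\bfP^\sym-\bfQ^\sym}) \sim \phi''(\abs{\bfP^\sym}+\abs{\bfQ^\sym})\abs{\bfP^\sym-\bfQ^\sym}^2 \sim (\delta + \abs{\bfP}+\abs{\bfQ})^{p-2}\abs{\bfP-\bfQ}^2,
\end{equation*}
matching the integral estimate above.

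Finally, \eqref{eq:hammerd} is obtained by specialising to $\bfQ = \bfzero$: since $\bfS(\bfzero) = \bfzero$ and $\bfF(\bfzero) = \bfzero$, the already-proved equivalences give $\bfS(\bfQ)\cdot\bfQ \sim \abs{\bfF(\bfQ)}^2 \sim \phi_0(\abs{\bfQ^\sym}) = \phi(\abs{\bfQ^\sym})$. All constants depend only on $C_0, C_1, p$, i.e. on the characteristics of $\bfS$, as asserted.
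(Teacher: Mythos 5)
The paper offers no proof of Lemma~\ref{lem:hammer}: it simply refers the reader to \cite{DieE08,dr-nafsa}, where these equivalences are established. Your outline reconstructs the standard argument from those references, and it is essentially the right one: reduction to symmetric tensors, a fundamental-theorem-of-calculus representation along the segment $[\bfQ,\bfP]$ combined with \eqref{eq:ass_S}, the key one-dimensional integral comparison $\int_0^1(\delta+\abs{\bfQ+\theta(\bfP-\bfQ)})^{p-2}\,d\theta \sim (\delta+\abs{\bfP}+\abs{\bfQ})^{p-2}$, and the pointwise equivalences $\phi''(t)\sim(\delta+t)^{p-2}$ and $\phi_a(t)\sim(\delta+a+t)^{p-2}t^2$. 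So in spirit your approach coincides with what the paper relies on.

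A few fine points are glossed over and deserve a sentence each if you wanted to make this airtight. First, Assumption~\ref{ass:1} gives $\bfS\in C^1$ only off the origin, so if the segment $[\bfQ^\sym,\bfP^\sym]$ passes through $\bfzero$ the FTC representation needs a word: either note that for $p>1$ the singularity $\abs{\bfA}^{p-2}$ is locally integrable along a line, or excise a small ball and pass to the limit using continuity of $\bfS$. Second, the step ``an analogous argument applied to $\bfF$'' is more compressed than it looks: the derivative bound $\abs{D\bfF(\bfA)}\sim(\delta+\abs{\bfA})^{(p-2)/2}$ plus FTC gives $\abs{\bfF(\bfP)-\bfF(\bfQ)}\sim\abs{\bfP-\bfQ}\int_0^1(\delta+\abs{\bfQ+\theta(\bfP-\bfQ)})^{(p-2)/2}\,d\theta$ (with the lower bound obtained by testing $(\bfF(\bfP)-\bfF(\bfQ))\cdot(\bfP-\bfQ)$ and Cauchy--Schwarz), and then one must apply the key integral comparison \emph{at the exponent $(p-2)/2$} and square, rather than directly at $p-2$; the two are reconciled precisely because the comparison holds at every exponent and both sides become $(\delta+\abs{\bfP}+\abs{\bfQ})^{p-2}\abs{\bfP-\bfQ}^2$. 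Third, in the last paragraph you write ``specialising to $\bfQ=\bfzero$'' but then conclude with $\phi_0(\abs{\bfQ^\sym})$; to land on $\phi_0$ directly one should set $\bfP=\bfzero$ in \eqref{eq:hammerb} (yielding $\phi_{\abs{\bfzero}}(\abs{\bfQ^\sym})=\phi(\abs{\bfQ^\sym})$); setting $\bfQ=\bfzero$ gives $\phi_{\abs{\bfP^\sym}}(\abs{\bfP^\sym})$, which also works but only after the extra observation $\phi_a(a)\sim\phi(a)$. None of these affects the validity of the approach.
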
 
%

Moreover, we observe that
\begin{align}
  \label{eq:hammere}  
  \abs{\bfS(\bfP) - \bfS(\bfQ)} &\sim
  \phi'_{\abs{\bfP^\sym}}\big(\abs{\bfP^\sym -
    \bfQ^\sym}\big)\qquad\forall\,\bfP, \bfQ \in \Rnn,
\end{align}
which allows us  to introduce a ``\textit{Natural distance}'' since by
the previous lemma we have, for all sufficiently smooth vector fields
$\bfu$ and $\bfw$,
  \begin{align*}
    \skp{\bfS(\bD \bfu) \!-\! \bfS(\bD\bfw)}{\bD\bfu \!-\! \bD \bfw}
    &\sim 
    \norm{\bfF(\bD\bfu) \!-\! \bfF(\bD\bfw)}_2^2 \,\sim \int_\Omega\!
    \phi_{\abs{\bD\bfu}}(\abs{\bD\bfu \!-\! \bD\bfw}) \,dx,
  \end{align*}
  and again the constants depend only on the characteristics of
  $\bfS$.
  
  In view of Lemma~\ref{lem:hammer} one can deduce many useful
  properties of the natural distance and of the quantities $\bF$,
  $\bS$ from the corresponding properties of the shifted N-functions
  $\{\phi_a\}$. For example the following important estimates follow
  directly from~\eqref{eq:hammere}, Young's
  inequality~\eqref{eq:ineq:young}, and~\eqref{eq:hammer}.

\begin{Lemma}
  \label{lem:quasinormtrick}
  For all $\epsilon>0$, there exist a constant $c_\epsilon>0$
  (depending only on $\epsilon>0$ and on the characteristics of
  $\bfS$) such that for all sufficiently smooth vector fields $\bu$,
  $\bv$, and $\bw$ we have
  \begin{align*}
    &\skp{\bfS(\bD\bfu) - \bfS(\bD\bfv)}{\bD\bfw - \bD \bfv}
\leq \epsilon\, \norm{\bfF(\bD\bfu) - \bfF(\bD\bfv)}_2^2
       +c_\epsilon\,  \norm{\bfF(\bD\bfw) - \bfF(\bD\bfv)}_2^2\,.
  \end{align*}
\end{Lemma}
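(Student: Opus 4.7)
The plan is to reduce the lemma to a pointwise inequality obtained by combining the characterization of $|\bfS(\bD\bfu) - \bfS(\bD\bfv)|$ in~\eqref{eq:hammere} with the shifted Young inequality~\eqref{eq:ineq:young}, and then to convert the resulting shifted N-functions into $\bfF$-distances via Lemma~\ref{lem:hammer}. Throughout, the choice of shift will be $a := |\bD\bfv^\sym|$; the equivalences \eqref{eq:hammera}--\eqref{eq:hammerc} guarantee that the shift by $|\bD\bfv^\sym|$ and the shift by $|\bD\bfu^\sym|$ produce equivalent quantities, so this choice is harmless.

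First, I would estimate the integrand pointwise by Cauchy--Schwarz and~\eqref{eq:hammere}:
\begin{align*}
(\bfS(\bD\bfu) - \bfS(\bD\bfv)) \cdot (\bD\bfw - \bD\bfv)
\,\le\, |\bfS(\bD\bfu) - \bfS(\bD\bfv)|\,|\bD\bfw - \bD\bfv|
\,\le\, c\,\phi'_a(|\bD\bfu - \bD\bfv|)\,|\bD\bfw - \bD\bfv|,
\end{align*}
using that $\bD\bfu, \bD\bfv$ are already symmetric, and choosing the shift $a=|\bD\bfv^\sym|$ in~\eqref{eq:hammere} (equivalent to the stated one with shift $|\bD\bfu^\sym|$ by Lemma~\ref{lem:hammer}).

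Next I would apply the second form of Young's inequality in~\eqref{eq:ineq:young}, with the roles of $\epsilon$ and $c_\epsilon$ exchanged (which is legitimate, as the two parameters enter symmetrically into the underlying Young inequality for the pair $(\phi_a,(\phi_a)^*)$), to the product $|\bD\bfw - \bD\bfv|\cdot\phi'_a(|\bD\bfu - \bD\bfv|)$. This yields
\begin{align*}
c\,\phi'_a(|\bD\bfu - \bD\bfv|)\,|\bD\bfw - \bD\bfv|
\,\le\, \epsilon\,\phi_a(|\bD\bfu - \bD\bfv|) + c_\epsilon\,\phi_a(|\bD\bfw - \bD\bfv|).
\end{align*}
Finally, invoking~\eqref{eq:hammerb} with the shift $a=|\bD\bfv^\sym|$ gives $\phi_a(|\bD\bfu - \bD\bfv|)\sim|\bfF(\bD\bfu)-\bfF(\bD\bfv)|^2$ and $\phi_a(|\bD\bfw - \bD\bfv|)\sim|\bfF(\bD\bfw)-\bfF(\bD\bfv)|^2$. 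Integrating over $\Omega$ delivers the stated estimate (after absorbing the universal constants into $\epsilon$ and $c_\epsilon$).

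The only point that requires a second glance is the consistent use of a single shift on the right-hand side of the pointwise estimate, since the natural shift produced by~\eqref{eq:hammere} is $|\bD\bfu^\sym|$ while the shift that converts both $\phi_a$-terms on the right to $\bfF$-distances via~\eqref{eq:hammerb} is most naturally taken as $|\bD\bfv^\sym|$. Lemma~\ref{lem:hammer} resolves this by asserting that all three quantities in \eqref{eq:hammera}--\eqref{eq:hammerc} are equivalent uniformly in $\bfP,\bfQ$, so either shift may be used interchangeably. Beyond this bookkeeping, the computation is routine.
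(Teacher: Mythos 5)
Your argument is correct and uses exactly the three ingredients the paper itself cites for this lemma, namely \eqref{eq:hammere}, the shifted Young's inequality \eqref{eq:ineq:young}, and the equivalences of Lemma~\ref{lem:hammer}; so this is the same approach. Two small streamlinings worth noting: applying \eqref{eq:hammere} with $\bfP=\bD\bfv$, $\bfQ=\bD\bfu$ (using the symmetry of $\abs{\bfS(\bfP)-\bfS(\bfQ)}$) yields the shift $a=\abs{\bD\bfv}$ directly, so no shift-change discussion is needed, and choosing $t=\abs{\bD\bfu-\bD\bfv}$, $s=\abs{\bD\bfw-\bD\bfv}$ in the second line of \eqref{eq:ineq:young} and dropping the nonnegative term $\phi_a'(t)s$ from the left already places $\epsilon$ on the correct $\phi_a(\abs{\bD\bfu-\bD\bfv})$ term, so no exchange of $\epsilon$ and $c_\epsilon$ is required.
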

 \subsection{Some technical preliminary results}
 We recall some regularity results for fluids with shear dependent
 viscosities (both continuous and time-discrete) and some convergence
 results we will need in the sequel.

 First, we recall that for the continuous problem~\eqref{eq:pfluid} we
 have the following existence and uniqueness result for strong
 solutions (cf.~\cite[Thm.~5.1]{bdr-7-5}).
\begin{theorem} 
  \label{thm_existence_strong}
  Let $\bS$ satisfy Assumption~\ref{ass:1} with $\frac{7}{5} <p\leq 2$
  and $\parameter \in [0,\parameter_0]$ with $\parameter_0>0$. Assume
  that ${\bff}\in {L^\infty(I; W^{1,2}(\Omega))} \cap
  {W^{1,2}(I;L^2(\Omega))}$ and also
  ${\bfu_0}\in{W_{\divergence}^{2,2}(\Omega)}$, $\skp{\bu_0}{1}=0$,
  and $\divo\bS(\bD\bu_0)\in L^2(\Omega)$.  Then, there exist a time
  $T'>0$ and a constant $c_0>0$, both depending on $(\parameter_0,p,C_0,\ff,
  \bfu_0,T,\Omega)$ but independent of $\parameter$, such that the
  system~\eqref{eq:pfluid} has a unique strong solution $\bfu\in
  L^p(I';W^{1,p}_{\divergence} (\Omega))$, $I' = [0,T']$ such that
  \begin{align}
    \label{ineq:locally_strong_for_7_over_5_reg1}
    \begin{split}
     \hspace*{-1mm} \norm{ \bfu_t}_{L^\infty(I'; L^2(\Omega))} +
      &\norm{\bF(\bD\bfu)}_{W^{1,2}(I'\times\Omega)} +
      \norm{\bF(\bD\bfu)}_{L^{2\frac{5 p-6}{2-p}}(I';W^{1,2}(\Omega))}
      \leq c_0\,.
    \end{split}
  \end{align}
  In particular this implies, uniformly in $\delta\in[0,\delta_{0}]$,
  \begin{subequations}
    \label{sob}
    \begin{align}
      &\bfu\in
      \setL^{\frac{p(5p-6)}{2-p}}(I';\setW^{2,\frac{3p}{p+1}}(\Omega))
      \cap  C(I';\setW^{1,r}(\Omega))\qquad \text{for }
      1\leq  r<6(p-1),\label{boundedness}
      \\
      &\bfu_t \in \setL^\infty(I';\setL^2(\Omega))\cap
      \setL^{\frac{p(5p-6)}{(3p-2)(p-1)}}(I';W^{1,\frac{3p}{p+1}}(\Omega))\,.
      \label{time_derivative}
    \end{align}
  \end{subequations}
\end{theorem}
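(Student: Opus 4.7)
The plan is to construct a strong solution via a Galerkin approximation in the basis of divergence-free eigenfunctions of the Stokes operator on the periodic cube, to derive a priori estimates uniform in the Galerkin parameter $N$ and in $\delta\in[0,\delta_{0}]$ matching~\eqref{ineq:locally_strong_for_7_over_5_reg1}, and then to pass to the limit by monotonicity and compactness. The compatibility assumption $\divo\bfS(\bD\bfu_0)\in L^{2}(\Omega)$ is precisely what provides a bound for $\bfu^N_t(0)$ in $L^2(\Omega)$ that is uniform in $N$ and $\delta$; this serves as the initial datum for the highest-order estimate.

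Three layers of estimate are required. First, testing with $\bfu^N$ gives the energy bound $\bfu^N\in L^\infty(I;L^2)\cap L^p(I;W^{1,p}_{\divergence})$ together with $\bF(\bD\bfu^N)\in L^2(I\times\Omega)$ by Lemma~\ref{lem:hammer}. Second, differentiating in time and testing with $\bfu^N_t$ yields $\bfu^N_t\in L^\infty(I';L^2)$ on a suitable short interval $I'\subset I$, the convective contribution being absorbed via~\eqref{eq:ineq:young} and the assumed regularity of $\bff$. Third, and centrally, the spatial second-derivative estimate: testing with a suitable tangential second difference of $\bfu^N$ (which in the periodic cube plays the role of $-\Delta\bfu^N$ without boundary residuals) produces, via the monotonicity~\eqref{1.4b} and Lemma~\ref{lem:hammer}, an inequality of the schematic form
\begin{align*}
c\,\norm{\nabla\bF(\bD\bfu^N)}_{2}^{2}\;\leq\;\skp{\partial_s\bff}{\partial_s\bfu^N}-\skp{\partial_s([\nabla\bfu^N]\bfu^N)}{\partial_s\bfu^N},
\end{align*}
and the task is to absorb the convective term into $\epsilon\,\norm{\nabla\bF(\bD\bfu^N)}_2^2$ plus integrable lower-order quantities.

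The principal obstacle is exactly this absorption. A Gagliardo--Nirenberg interpolation that exploits $\bF(\bD\bfu^N)\in L^\infty(I';L^2)\cap L^2(I';W^{1,2})$ together with the basic energy bound yields space--time control of $\bfu^N$ in the correct supercritical Lebesgue class, and the arithmetic of the interpolation exponents closes precisely at $p=7/5$, which is the known sharp threshold below which the cubic nonlinearity in three dimensions cannot be dominated by $\epsilon\,\norm{\nabla\bF(\bD\bfu^N)}_2^2$. A Gronwall argument on $I'$, whose length is determined by the size of the initial data measured in the natural norm, then produces~\eqref{ineq:locally_strong_for_7_over_5_reg1}. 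The sharper time-integrability $\bF(\bD\bfu)\in L^{2(5p-6)/(2-p)}(I';W^{1,2})$ is recovered by integrating the same pointwise bound in time without invoking Gronwall, treating the already-established $L^\infty(I';L^2)$-bound on $\bfu_t$ as a controlled forcing.

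Passage to the Galerkin limit is then standard: weak and weak-$*$ compactness in the reflexive spaces, Aubin--Lions for strong $L^2$-convergence of $\bD\bfu^N$, and a Minty--Browder identification of the limit of $\bfS(\bD\bfu^N)$ facilitated by~\eqref{eq:hammera}; uniqueness follows by subtracting two solutions, using~\eqref{eq:hammera}, and invoking Gronwall, the convective difference being controlled by~\eqref{boundedness}. Finally, the Sobolev-type embeddings~\eqref{sob} are deduced algebraically from~\eqref{ineq:locally_strong_for_7_over_5_reg1}: the pointwise identification $\abs{\nabla\bfu}\sim(\delta+\abs{\bD\bfu})^{(2-p)/2}\abs{\bF(\bD\bfu)}$ (combined with Korn's inequality in the periodic setting) together with H\"older's inequality with the exponents $(2-p)/2$ and $p/2$ yields the stated integrability of $\nabla^2\bfu$, while the bound on $\bfu_t$ in $W^{1,3p/(p+1)}$ follows by solving the momentum equation for $\bfu_t$ and applying the same interpolation to the resulting right-hand side.
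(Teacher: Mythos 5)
The paper does not actually prove this theorem: it is recalled verbatim (with the reference \cite[Thm.~5.1]{bdr-7-5}) from the authors' earlier work and is used in the present article as a black box, so there is no in-paper proof to compare against. That said, your reconstruction --- Galerkin approximation in the divergence-free periodic eigenbasis (for which testing with $-\Delta\bfu^N$ is legitimate because the eigenbasis is invariant under $\Delta$), use of the compatibility condition $\divo\bS(\bD\bfu_0)\in L^2(\Omega)$ to seed $\|\bfu^N_t(0)\|_2$ uniformly in $N$ and $\delta$, the hierarchy of energy / time-derivative / second-spatial-derivative estimates with the convective term absorbed by Gagliardo--Nirenberg interpolation, and a Minty--Browder limit passage --- is the standard route and does track the outline of \cite{bdr-7-5}. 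The algebra extracting~\eqref{sob} from~\eqref{ineq:locally_strong_for_7_over_5_reg1} via $|\nabla^2\bfu|\lesssim(\delta+|\bD\bfu|)^{(2-p)/2}|\nabla\bF(\bD\bfu)|$ together with Korn and H\"older is correct.

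Two places in your sketch are too thin to be taken as proved. First, the sentence asserting that $\bF(\bD\bfu)\in L^{2(5p-6)/(2-p)}(I';W^{1,2}(\Omega))$ is ``recovered by integrating the same pointwise bound in time without invoking Gronwall'' glosses over the hardest step: in \cite{bdr-7-5} the sharper temporal integrability requires a separate interpolation/iteration argument which keeps the power of $\|\nabla\bF(\bD\bfu)\|_2$ arising from the convective term strictly below two, and this is a genuinely different bookkeeping than the $L^2_t$-estimate produced by Gronwall. Second, the claimed independence of $T'$ and $c_0$ from $\delta\in[0,\delta_0]$ has to be tracked through every constant (the equivalences of Lemma~\ref{lem:hammer}, the shifted Young inequality~\eqref{eq:ineq:young}, and the H\"older step where the degenerate weight $(\delta+|\bD\bfu|)^{(2-p)/2}$ appears); your sketch asserts this uniformity but does not verify it. Neither is a wrong turn --- they are precisely the nontrivial portions of the cited proof --- but as written they are gaps rather than arguments.
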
%
The above theorem, whose proof employs in a substantial manner the
hypothesis of space-periodicity, has been used to prove the following
optimal convergence result for the numerical error with respect to a
semi-implicit time discretization (cf.~\cite[Thm~1.1, 4.1]{bdr-7-5-num}).
\begin{theorem} 
  \label{thm_regularity_discrete}
  Let $\bS$ satisfy Assumption~\ref{ass:1} with
  $p\in\big(\frac{3}{2},2]$ and $\para \in [0,\para_0]$, where
  $\para_0>0$ is an arbitrary number. Let $ {\bff} \in {C(I;
    W^{1,2}(\Omega))} \cap {W^{1,2}(I; L^2(\Omega))}$, where
  $I=[0,T]$, for some $T>0$, and let ${\bfu_0} \in
  {W_{\divergence}^{2,2}(\Omega)}$ with $\divo \bS(\bD \bu_0) \in
  L^2(\Omega)$ be given. Let $\bfu $ be a strong
  solution of the (continuous) problem~\eqref{eq:pfluid} satisfying
  \begin{align}
    \label{eq:reg1}
    \begin{split}
      \norm{ \bfu_t}_{L^\infty(I; L^2(\Omega))} +
      \norm{\bF(\bD\bfu)}_{W^{1,2}(I\times\Omega)} +
      \norm{\bF(\bD\bfu)}_{L^{2\frac{5 p-6}{2-p}}(I;W^{1,2}(\Omega))}
      &\leq c_{1}.
    \end{split}
  \end{align}
  Then, there exists $k_{0}>0$ such that for $k\in(0,k_{0})$ the
  unique time-discrete solution $\bfum$ of the semi-implicit time-discrete
  iterative scheme
  \begin{equation}
    \label{m_iterate} 
  \tag{$\text{NS}_p^k$}
  \begin{aligned}
    d_t \bfum-\divo \bS(\bD\bfum)+[\nabla
    \bfum]\bu^{m-1}+\nabla\pi^m&=\ff(t_m)\qquad&&\text{in }\Omega,
    \\
    \divo\bfum&=0&&\text{in }\Omega,
  \end{aligned}
\end{equation}
(endowed with periodic boundary conditions) satisfies the error
estimate
\begin{equation*}
    \max_{0\leq m\leq M}\|\bfutm-\bfum\|^2_2+
    \ksum\|\bF(\bD\bfutm)-\bF(\bD\bfum)\|^2_{2}\leq c\,
    k^2,  
  \end{equation*}
  where the constants ${k}_{0}$ and $c$ depend on $c_{1}$ and on the
  characteristics of $\bS$, but are independent of $\parameter$.
  Moreover, for each $1\leq r<6(p-1)$, it holds
\begin{subequations}
  \label{eq:regularity_discrete_problem}
  \begin{align}
    &\bfum\in
    l^{\frac{p(5p-6)}{2-p}}(I^M;W^{2,\frac{3p}{p+1}}(\Omega)) \cap
    l^\infty(I^M;W^{1,r}(\Omega)),
    \\
    &d_t\bfum\in l^\infty(I^M;L^{2}(\Omega))\cap
    l^{\frac{p(5p-6)}{(3p-2)(p-1)}}
    (I^M;W^{1,\frac{3p}{p+1}}(\Omega)).
  \end{align}
\end{subequations}
\end{theorem}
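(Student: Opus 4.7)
My plan is to establish the three claims in order. For existence and uniqueness of each iterate $\bfum$, fix $\bfu^{m-1}$ divergence-free and periodic; the update reduces to the stationary problem
\begin{equation*}
  \tfrac{1}{k}\bfu - \divo \bfS(\bD\bfu) + [\nabla\bfu]\bfu^{m-1} + \nabla\pi = \bff(t_m) + \tfrac{1}{k}\bfu^{m-1}, \qquad \divo\bfu = 0.
\end{equation*}
Since the convection is linear in the unknown and $\skp{[\nabla\bfu]\bfu^{m-1}}{\bfu}=0$ (by periodicity and $\divo\bfu^{m-1}=0$), the left-hand operator is strictly monotone and coercive on $V$. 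A Galerkin approximation with Minty's trick produces $\bfum\in V$; strict monotonicity gives uniqueness, and the continuous inf-sup condition recovers $\pi^m\in Q$.

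The regularity~\eqref{eq:regularity_discrete_problem} is obtained by transferring the continuous a-priori bounds of Theorem~\ref{thm_existence_strong} to the discrete level. The workhorse is the identity $\skp{-\divo\bfS(\bD\bfum)}{-\Delta\bfum} \sim \norm{\nabla\bfF(\bD\bfum)}_2^2$ from the $\bfF$-machinery, combined with discrete summation by parts, which yields $\tfrac12 d_t\norm{\nabla\bfum}_2^2 + \tfrac{k}{2}\norm{\nabla d_t\bfum}_2^2$ on the left. Convective and forcing contributions are handled by interpolation exactly as in the continuous case, and all constants remain uniform in $\parameter$ since every estimate is phrased through $\bfF$ and the shifted N-functions $\phi_a$.

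For the main error estimate set $\bfem:=\bfu(t_m)-\bfum$, subtract~\eqref{m_iterate} from~\eqref{eq:pfluid} at $t=t_m$, and test with $\bfem$ (the pressure vanishes by periodicity and $\divo\bfem=0$). Using $\skp{d_t\bfem}{\bfem}\ge \tfrac12 d_t\norm{\bfem}_2^2$ and Lemma~\ref{lem:hammer} gives
\begin{equation*}
  \tfrac12 d_t\norm{\bfem}_2^2 + c\,\norm{\bfF(\bD\bfu(t_m))-\bfF(\bD\bfum)}_2^2 \le R^m + N^m,
\end{equation*}
where $R^m:=\skp{\bfu_t(t_m)-d_t\bfu(t_m)}{\bfem}$ is the consistency remainder and $N^m$ is the convective contribution. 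A Taylor expansion gives $k\sum_m \norm{\bfu_t(t_m)-d_t\bfu(t_m)}^2 \lesssim k^2\norm{\bfu_{tt}}^2_{L^2}$, finite thanks to~\eqref{eq:reg1}.

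The main obstacle is $N^m$. Splitting the discrepancy as $[\nabla\bfem]\bfu(t_m) + [\nabla\bfum]\bigl((\bfu(t_m)-\bfu(t_{m-1}))+\mathbf{e}^{m-1}\bigr)$, the first summand integrates to $0$ against $\bfem$ since $\divo\bfu(t_m)=0$. For the remaining two pieces the regularity~\eqref{eq:regularity_discrete_problem} supplies $\nabla\bfum\in l^\infty(L^r)$; Lemma~\ref{lem:quasinormtrick} then absorbs an $\epsilon$-fraction into the natural distance on the left, while $\norm{\bfu(t_m)-\bfu(t_{m-1})}_2^2 \le k\int_{t_{m-1}}^{t_m}\norm{\bfu_t}_2^2\,dt$ supplies the extra factor of $k$. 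Summing in $m$ and invoking the discrete Gronwall lemma (with $\mathbf{e}^0=0$) yields the $\mathcal O(k)$ bound. Keeping every constant uniform in $\parameter$ is what forces the exclusive use of the $\bfF$- and $\phi_a$-framework of Lemmas~\ref{lem:hammer}--\ref{lem:quasinormtrick}, which has been engineered precisely for this purpose.
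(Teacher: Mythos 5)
This theorem is not proved in the paper at hand; it is recalled verbatim from \cite[Thm.~1.1,~4.1]{bdr-7-5-num}, so the comparison is against that reference. Your sketch follows the natural outline (existence by monotonicity, error equation tested with $\bfem$, Gronwall), but two steps would not survive scrutiny.

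First, the consistency error. You bound $k\sum_m\|\bfu_t(t_m)-d_t\bfu(t_m)\|_2^2$ by $k^2\|\bfu_{tt}\|_{L^2(L^2)}^2$, but $\bfu_{tt}\in L^2(I\times\Omega)$ is \emph{not} among the hypotheses: \eqref{eq:reg1} gives only $\bfu_t\in L^\infty(L^2)$ and $\bfF(\bD\bfu)\in W^{1,2}(I\times\Omega)$. The reference avoids this by averaging the continuous equation over $[t_{m-1},t_m]$ before subtracting the discrete one, so that the $d_t\bfu(t_m)$ term appears exactly (no consistency remainder on the time derivative) and the consistency defect is transferred to the stress, $\frac1k\int_{t_{m-1}}^{t_m}\big(\bfS(\bD\bfu(s))-\bfS(\bD\bfu(t_m))\big)\,ds$, which via \eqref{eq:hammere} and Young's inequality \eqref{eq:ineq:young} is controlled by $\frac1k\int_{t_{m-1}}^{t_m}\|\bfF(\bD\bfu(s))-\bfF(\bD\bfu(t_m))\|_2^2\,ds\lesssim k\,\|\partial_t\bfF(\bD\bfu)\|_{L^2(L^2)}^2$ --- precisely the quantity that \eqref{eq:reg1} provides. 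Without this device your argument requires a regularity that is unavailable.

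Second, the absorption of the convective remainder. You invoke Lemma~\ref{lem:quasinormtrick}, but that lemma concerns pairings of the form $\skp{\bfS(\bD\bfu)-\bfS(\bD\bfv)}{\bD\bfw-\bD\bfv}$ and has nothing to say about trilinear convective pairings $\skp{[\nabla\bfum]\bfe^{m-1}}{\bfem}$. What the convective estimate actually produces is a term of the type $c\,\|\bD\bfe^{m-1}\|_p\,\|\bD\bfem\|_p$ (after using $\nabla\bfum\in l^\infty(L^r)$ and Sobolev embedding), and the coercivity of $\bfS$ on the left yields only $(\delta+\|\bD\bfem\|_p)^{p-2}\|\bD\bfem\|_p^2$, a quantity that degenerates for $p<2$. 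Closing the estimate then requires the generalized Gronwall argument of Lemma~\ref{lem:discrete_Gronwall_lemma} (with its bootstrap establishing $\max_m\|\bD\bfem\|_p\le1$), not the classical discrete Gronwall lemma. The phrase ``invoking the discrete Gronwall lemma'' elides the main technical obstacle of the $p\le 2$ regime and of $\delta$-uniformity; this nonlinear Gronwall step is where the bulk of the work in \cite{bdr-7-5-num} resides.
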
%

We observe that by parabolic interpolation, (cf.~\cite[Rem.~2.7]{bdr-7-5-num}) it also follows
that
\begin{equation*}
  d_t\bfum\in l^{\frac{11p-12}{3(p-1)}}(I^M;L^{\frac{11p-12}{3(p-1)}}(\Omega)),  
\end{equation*}
and consequently
\begin{equation}
  \label{eq:regularity_time-discrete-derivative}
  d_t\bfum  \in
  l^{\frac{p}{p-1}}(I^M;L^{\frac{p}{p-1}}(\Omega))=l^{p'}(I^M;L^{p'}(\Omega))\quad
  \text{ if }\ p>\frac{3}{2}. 
\end{equation}
The latter property will have a relevant role to estimate in the error
equation the term involving the discrete pressure.

\bigskip

One main tool in the sequel will be also the following generalized
Gronwall lemma, which is a minor variation of that proved in great
detail in~\cite[Lemma~3.3]{bdr-7-5-num}.
\begin{lemma}
  \label{lem:discrete_Gronwall_lemma}
  Let $1<p\leq 2$ and let be given two non-negative sequences
  $\{a_m\}_m$ and $\{b_m\}_m$, and two sequences $\{r_m\}_m$ and
  $\{s_m\}_m$ for which there exists $\gamma_0>0$ such that for all
  $0<h<1/\sqrt{\gamma_0}$:
  \begin{equation}
    \label{eq:r_m}
    \begin{aligned}
      &a_0^{2}\leq \gamma_0\,h^{2}, \quad
      b_0^{2}\leq \gamma_0\,h^{2},\quad\ksum r_m^2\leq
      \gamma_0\,h^2,\quad\text{and}\quad\ksum s_m^2\leq \gamma_0\,h^2. 
    \end{aligned}
  \end{equation}
  Further, let there exist constants $\gamma_1,\,\gamma_2,\gamma_3>0$,
  $ \Lambda>0$, and some $0<\theta\leq 1$ such that for some $\lambda
  \in [0,\Lambda]$ the following two inequalities are satisfied for
  all $m\geq1$:
  \begin{align}
    \label{discrete_Gronwall_bis}
    &d_t a_m^2+\gamma_1(\lambda+b_m)^{p-2}b_m^2\leq b_m r_m+
    \gamma_2 b_{m-1}b_m+s_m^2,
    \\
    \label{discrete_Gronwall_ter}
    &d_t a_m^2+\gamma_1(\lambda+b_m)^{p-2}b_m^2\leq b_m r_m+\gamma_3
    b_{m}b_{m-1}^{1-\theta}a_m^\theta+s_m^2 .  
  \end{align}
  %
  %
  Then, there exist $\overline{k},\,\overline{\gamma_0}>0$ such that if
  $h^{2}<\overline{\gamma_0}\,k$ and if~\eqref{discrete_Gronwall_bis},
  \eqref{discrete_Gronwall_ter} hold for $0<k<\overline{k}\leq1$, then
  there exist $\gamma_4,\,\gamma_5>0$, independent of $\lambda$, such
  that
  \begin{align}
    &\max_{1\leq m\leq M} b_m\leq 1,
    \\
    \label{Gronwall_discrete2}
    &\max_{1\leq m\leq M} a_m^2+{\gamma_1(\lambda+\Lambda)^{p-2}}
    \ksumN b_m^2\leq \gamma_4\,h^{2}\,\textrm{exp}(2 \gamma_{5}k\,M).
  \end{align}
\end{lemma}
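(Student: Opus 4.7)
The plan is a simultaneous strong induction on $m$ for both conclusions. The induction hypothesis at step $m-1$ asserts that $b_l \leq 1$ and that $a_l^2$ already satisfies an exponential bound of the type required in \eqref{Gronwall_discrete2} for every $l \leq m-1$. The base case $m=0$ follows from \eqref{eq:r_m}: the initial bound $b_0^2 \leq \gamma_0 h^2$ together with $h < 1/\sqrt{\gamma_0}$ gives $b_0 \leq 1$, while the initial bound on $a_0^2$ is absorbed into the constant $\gamma_4$.

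The first task in the inductive step is to upgrade $b_{m-1}\leq 1$ to $b_m \leq 1$ using only \eqref{discrete_Gronwall_bis}. The natural move is to multiply \eqref{discrete_Gronwall_bis} by $k$, sum over $l=1,\ldots,m$, and telescope the discrete time derivative into $a_m^2 - a_0^2$. After discrete Cauchy--Schwarz on $k \sum b_l r_l$ and $k\sum b_{l-1} b_l$, using the inductive control $b_l \leq 1$ for $l \leq m-1$, and combining with \eqref{eq:r_m}, one reaches a bound of the shape $k \sum_{l=1}^{m} b_l^2 \lesssim h^2$. Since $k b_m^2$ is a single summand on the left, this gives $b_m^2 \lesssim h^2/k < \overline{\gamma}_0$ by the CFL hypothesis $h^2 < \overline{\gamma}_0 k$; choosing $\overline{\gamma}_0$ small then forces $b_m \leq 1$. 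Because $(\lambda+b_m)^{p-2} b_m^2$ degenerates when $b_m$ is large, this step is actually implemented as a bootstrap (provisionally assume $b_m \leq 2$, derive $b_m \leq 1$).

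With $b_l \leq 1$ for all $l \leq m$ in hand, I would turn to \eqref{discrete_Gronwall_ter}. Since $\lambda \leq \Lambda$ and $b_m \leq 1$, the degenerate factor is controlled from below by a constant multiple of $(\lambda+\Lambda)^{p-2}b_m^2$. I would then use weighted Young's inequality to split $b_m r_m$ and the interpolatory term $\gamma_3 b_m b_{m-1}^{1-\theta} a_m^\theta$ --- the latter by factoring $a_m^\theta = a_m^\theta\cdot 1$ and exploiting $\theta\leq 1$ to produce an $a_m^2$-term plus remainders absorbable into $s_m^2 + r_m^2$ via the CFL hypothesis. This reduces \eqref{discrete_Gronwall_ter} to an inequality of the form $d_t a_m^2 + \tfrac{\gamma_1}{2}(\lambda+\Lambda)^{p-2} b_m^2 \leq C a_m^2 + C(r_m^2+s_m^2)$, and a standard discrete Gronwall step on $a_m^2$, together with the data bound $a_0^2 + k\sum (r_l^2+s_l^2) \leq 3\gamma_0 h^2$ from \eqref{eq:r_m}, yields \eqref{Gronwall_discrete2} with the required constants $\gamma_4,\gamma_5$ independent of $\lambda\in[0,\Lambda]$.

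The main obstacle will be the first step: converting an averaged, $l^2$-in-time control into the pointwise bound $b_m \leq 1$, while the coercive weight $(\lambda+b_m)^{p-2}$ is degenerate for $p<2$ and cannot itself dominate $b_m^2$ uniformly. This is precisely where the CFL condition $h^2 < \overline{\gamma}_0 k$ is used: it trades the $l^2$-in-time bound of order $h^2$ for a pointwise bound of order $h^2/k < \overline{\gamma}_0$, which can be made as small as needed. Tuning $\overline{\gamma}_0$, $\overline{k}$, $\gamma_4$ and $\gamma_5$ so that the bootstrap closes uniformly in $\lambda$ is the one delicate calibration; the subsequent Gronwall argument based on \eqref{discrete_Gronwall_ter} is then essentially classical.
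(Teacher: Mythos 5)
Your proposal is correct and follows essentially the same strategy as the paper: induction to establish $b_m\leq 1$ from~\eqref{discrete_Gronwall_bis} together with the condition $h^2<\overline{\gamma}_0\,k$, then a standard discrete Gronwall argument on~\eqref{discrete_Gronwall_ter}. The one place worth sharpening is the key step: rather than your ``provisionally assume $b_m\leq 2$'' bootstrap (which does not close by itself, since a discrete sequence can in principle jump past $2$ without any continuity to exclude it), the paper argues by direct contradiction---suppose $b_N>1$ while $b_m\leq 1$ for $m<N$, use $(\lambda+b_m)^{2-p}\leq(\lambda+b_N)^{2-p}\leq(\lambda+b_N)^{2(2-p)}$ for all $m\leq N$, and divide by $(\lambda+b_N)^{p-2}$ to reach $1<b_N^{2(p-1)}\lesssim h^2/k$, impossible once $h^2/k$ is small---which delivers exactly the estimate you are after, with the degeneracy of $(\lambda+b_N)^{p-2}$ handled transparently.
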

\begin{proof}
  The proof of this result is a simple adaption of that
  of~\cite[Lemma~3.3]{bdr-7-5-num}.  Nevertheless we report the main
  changes needed to accomplish the proof.  In particular, we will use
  it for $a_m:=\norm{\bfum-\bfum_h}_{2}$ and
  $b_m:=\norm{\bD\bfum-\bD\bfum_h}_{p}$.

  The proof goes by induction on $1\leq N\le M$. Since in the
  inequality~\eqref{discrete_Gronwall_ter} the term $b_{m-1}$ is
  present and since contrary to Ref.~\cite {bdr-7-5-num}
  $a_{0},b_{0}\not=0$, some care has to be taken to start the
  induction argument.  The most important part of the proof is that of
  showing that $b_{m}\leq1$, because then the
  estimate~\eqref{Gronwall_discrete2} will follow by applying the
  classical discrete Gronwall lemma.  We will use the same argument to
  check as starting inductive step that~\eqref{Gronwall_discrete2} is
  satisfied for $N=1$, as well as to show that if
  inequality~\eqref{Gronwall_discrete2} is satisfied for a given
  $N\ge 1$, then holds true also for $N+1$.

  Let us suppose \textit{per absurdum} that $b_N>1$, while
  $b_{m}\leq1$ for $m<N$. We multiply~\eqref{discrete_Gronwall_bis} by
  $k$ and we sum over $m$, for $m=1,\dots,N$. It readily follows that:
  \begin{equation*}
    \begin{aligned}
      & a_N^2+\gamma_1\ksumN(\lambda+b_m)^{p-2}b_m^2\leq
      \\
      & \leq
      a_{0}^{2}+\frac{\gamma_1}{2}\ksumN(\lambda+b_m)^{p-2}b_m^2
      +\frac{1}{\gamma_1}\ksumN 
      (\lambda+b_m)^{2-p}(r_m^2+\gamma_2^2b_{m-1}^2)+ \ksumN s_m^2.
    \end{aligned}
  \end{equation*}
  We absorb the second term from the right-hand side in the left-hand
  side and we observe that $ (\lambda+b_m)^{2-p}\leq
  (\lambda+b_N)^{2-p}\leq(\lambda+b_N)^{2(2-p)}$, regardless of the
  value of $\lambda\ge 0$.  Neglecting all terms on the left-hand
  side, except the one with $m=N$, and dividing both sides by
  $\frac{\gamma_1}{2}k(\lambda+b_N)^{p-2}\not=0$ we get,
  \begin{equation}
    \label{eq:inductive}
    b_N^2\leq \frac{2(\lambda+b_N)^{2(2-p)}}{k \gamma_1}
    \left[a_{0}^{2}
      +\frac{1}{\gamma_{1}}\ksumN
      (r_m^2+\gamma_2^2b_{m-1}^2)+\ksumN s_m^2\right].
  \end{equation}
  Now, if we are dealing with the initial step $N=1$, we have on the
  right-hand side of~\eqref{eq:inductive} a term containing
  $b_{0}^{2}$ on which we need to show that it 
  satisfies~\eqref{Gronwall_discrete2}. The hypothesis~\eqref{eq:r_m}
  and the restriction on $h$ imply that $a_{0}\leq \gamma_0\,h^{2}$ and
  $b_{0}\leq 1$. We also need to satisfy the same
  estimate~\eqref{Gronwall_discrete2}  when $m=0$, namely:
  \begin{equation*}
    \gamma_{1}(\lambda+\Lambda)^{p-2}  k\,      b_0^2\leq
    \gamma_4\,h^{2}\,\textrm{exp}(2 \gamma_{5}k\,M).
  \end{equation*}
  Since $k\leq1$, and given $\gamma_0>0$, it is enough to choose $\gamma_{4}>0$
  large enough such that the following inequality is satisfied
  \begin{equation}
    \label{eq:gamma4}
    \gamma_0\leq\min\left\{1,\frac{(2\Lambda)^{{2-p}}}{\gamma_{1}}\right\}\gamma_{4}
    \,\textrm{exp}(2\gamma_{5}k\,M). 
  \end{equation}
  Observe that this choice is always possible since $p\leq2$.  
  
  On the other hand, in the calculations with $N>1$ we can simply
  use~\eqref{Gronwall_discrete2} (which starts at $N=1$
  if~\eqref{eq:gamma4} is satisfied) as inductive assumption to
  estimate the right-hand side of~\eqref{eq:inductive}.
  
  As a result of the choice of $\gamma_{4}$, in both cases it follows
  with the same algebraic manipulations
  of~\cite[Lemma~3.3]{bdr-7-5-num} that we can bound the right-hand
  side of~\eqref{eq:inductive} as follows:
  \begin{equation*}
    1<b_N^{2(p-1)}\leq \frac{h^{2}}{k}
    \frac{2\,{(1+\Lambda)^{2(2-p)}}}{\gamma_{1}} 
    \left[\gamma_0+\frac{\gamma_0}{\gamma_{1}}+\frac{\gamma_{2}^{2} 
        \gamma_{4}} {\gamma_{1} 
        (\lambda+\Lambda)^{p-2}} 
      \textrm{exp}(2\gamma_{5}k\,N)+\gamma_0
    \right]. 
  \end{equation*}
  This gives a contradiction, provided that 
  \begin{equation*}
    \frac{h^{2}}{k}\leq \frac {\gamma_{1}}{2\,{(1+\Lambda)^{2(2-p)}}}
    \left[\gamma_0+\frac{\gamma_0}{\gamma_{1}}+
      \frac{\gamma_{2}^{2}\gamma_{4}}{\gamma_{1}(\lambda+\Lambda)^{p-2}} 
      \textrm{exp}(2\gamma_{5}k\,N)+\gamma_0\right]^{-1}:=\overline{\gamma_0}.
  \end{equation*}
  This finally proves that $b_N\leq1$. Then the rest of the proof goes
  exactly as in the cited lemma, with a further application of the
  standard discrete Gronwall lemma. It is in this last step that one
  has to assume the limitation
  $k<\overline{k}:=\min\{1,(2\gamma_{5})^{-1}\}$.
\end{proof}

\medskip 

Inspecting the proof it is clear that we have also the following
result, where the doubling of the constant on the right-hand side comes
from having the same estimate separately for $a_{0},b_{0}$ and for
$a_{m},b_{m}$, for $m>1$.

\begin{Corollary}
  Let the same hypotheses of Lemma~\ref{lem:discrete_Gronwall_lemma}
  be satisfied, then in addition we have that
  \begin{align}
    &\max_{0\leq m\leq M} b_m\leq 1,
    \\
    &\max_{0\leq m\leq M} a_m^2+{\gamma_1(\lambda+\Lambda)^{p-2}}
    \ksum b_m^2\leq  2\gamma_4\,h^{2}\,\textrm{exp}(2 \gamma_{5}k\,M).
  \end{align}
\end{Corollary}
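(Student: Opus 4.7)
The plan is to reduce the Corollary to the Lemma by handling the $m=0$ index separately. Applying Lemma~\ref{lem:discrete_Gronwall_lemma} under the stated hypotheses already gives $\max_{1\leq m\leq M} b_m\leq 1$ and
\begin{equation*}
\max_{1\leq m\leq M} a_m^2+\gamma_1(\lambda+\Lambda)^{p-2}\ksumN b_m^2\leq \gamma_4\,h^{2}\,\textrm{exp}(2\gamma_{5}k\,M),
\end{equation*}
so it only remains to account for the single extra index $m=0$ and to verify that incorporating it merely doubles the constant.

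First I would prove $b_0\leq 1$. By hypothesis $b_0^2\leq \gamma_0 h^2$, and the restriction $h^2<\overline{\gamma_0}\,k\leq \overline{\gamma_0}$ together with $h<1/\sqrt{\gamma_0}$ yields $b_0\leq 1$. Combined with the bound from the Lemma this gives the first assertion, namely $\max_{0\leq m\leq M}b_m\leq 1$.

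Next I would turn to the second assertion. Since $\ksum = \ksumN + k\,(\cdot)_{m=0}$, it suffices to check that
\begin{equation*}
a_0^2+\gamma_1(\lambda+\Lambda)^{p-2}\,k\,b_0^2\ \leq\ \gamma_4\,h^{2}\,\textrm{exp}(2\gamma_{5}k\,M),
\end{equation*}
so that adding this inequality to the bound produced by Lemma~\ref{lem:discrete_Gronwall_lemma} yields the claim with the factor $2\gamma_4$ on the right. By hypothesis $a_0^2\leq \gamma_0 h^2$, and since $k\leq 1$ we also have $k\,b_0^2\leq \gamma_0 h^2$. But this is exactly the inequality already enforced by the choice~\eqref{eq:gamma4} of $\gamma_4$ in the proof of the Lemma (which is made so that the $m=0$ contribution alone respects~\eqref{Gronwall_discrete2}); therefore no new smallness condition on $h$ or $k$ is needed.

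No step is really a genuine obstacle here: the entire work was done in the proof of Lemma~\ref{lem:discrete_Gronwall_lemma}, where $\gamma_4$ was deliberately chosen so as to absorb the initial data $a_0,b_0$. The only point to be a little careful about is the algebraic bookkeeping that the factor from the $m=0$ contribution is the same $\gamma_4\,h^{2}\,\textrm{exp}(2\gamma_{5}k\,M)$ and not something larger; this is immediate from~\eqref{eq:gamma4} since $k\leq 1$ and $\lambda\leq \Lambda$.
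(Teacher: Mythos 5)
Your proof is correct and takes essentially the same route the paper intends: the Lemma gives the bound over $1\le m\le M$, and the hypotheses on $a_0,b_0$ together with the choice~\eqref{eq:gamma4} of $\gamma_4$ (and $k\le1$, $h<1/\sqrt{\gamma_0}$) control the $m=0$ contribution by the same quantity, yielding the factor $2\gamma_4$. This matches the paper's remark that the doubling of the constant comes from estimating the $m=0$ terms separately from the $m\ge 1$ terms.
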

\subsection{Numerical algorithms}
We write explicitly the numerical algorithms we will consider and
state some basic existence results for the space-time-discrete solutions.

Given a net $I^M$, a triangulation $\mathcal{T}_h$ of $\Omega$, and
conforming spaces $V_h,\,Q_h$, (recall notation from
Sec.~\ref{sec:notation}) for the space-time-discrete problem, we use
the following algorithm:

\medskip

\noindent\textbf{Algorithm (space-time-discrete, Euler semi-implicit)} Let
$\bu^0_h=\Pidiv\bu_0$.  Then, for $m\geq1$ and $\bu^{m-1}_h\in V_h$
given from the previous time-step, compute the iterate
$(\bfumh,\pi^m_h)\in V_h\times Q_h$ such that for all $\bxi_h\in V_h$,
and $\eta_h\in Q_h$
\begin{equation}
  \label{eq:Qmh_iterate} 
  \tag{$Q^m_h$}
  \begin{aligned}
   \hspace*{-3mm} \langle d_t
    \bfumh,\bxi_h\rangle_h\!+\!\langle\bS(\bD\bfumh),\bD\bxi_h\rangle_h\! +\!
    b(\bu^{m-1}_h\!,\bfumh,\bxi_h)
       \! - \!\langle \divo\bxi_h,\pi^m_h\rangle_h
    &=\langle\ff(t_m),\bxi_h\rangle_h,\hspace*{-2mm}
    \\
    \langle\divo \bfumh,\eta_h\rangle_h&=0.
  \end{aligned}
\end{equation}
We also observe that the (space-continuous) time-discrete
scheme~\eqref{m_iterate} from Theorem~\ref{thm_regularity_discrete}
can be formulated in a weak form as follows: Let be given
$\bu^0=\bu_0$, $m\geq1$, and $\bu^{m-1}\in V$ evaluated from the
previous time-step, compute the iterate $(\bfum,\pi^m)\in V\times Q$
such that for all $\bxi\in V$,
and $\eta\in Q$
\begin{equation*}
  \tag{$Q^m$}
  \begin{aligned}
    \skp{d_t\bfum}{\bfxi}+\skp{\bS(\bD\bfum)}{\bD\bfxi}+b(\bfu^{m-1},
    \bfum,\bfxi)
    -\skp {\divo\bfxi}{\pi^m}
    &=\skp{\ff(t_m)}{\bfxi},
    \\
    \skp{\divo\bfum}{\eta}&=0.
  \end{aligned}
\end{equation*}
The existence of a solution $(\bfum,\pi^m)$ and its uniqueness follow
from Thm.~\ref{thm_regularity_discrete}, concerning strong solutions
$\bfum\in V(0)$ of~\eqref{m_iterate}. This solution is a fortiori also
a weak solution of the following problem: Find $\bfum\in V(0)$ such
that for all $\bxi\in V(0)$
\begin{equation*}
\tag{$P^m$}
  \begin{aligned}
    \langle d_t
    \bfum,\bxi\rangle+\langle\bS(\bD\bfum),\bD\bxi\rangle+b(\bu^{m-1},\bfum,\bxi) 
    &=\langle\ff(t_m),\bxi\rangle ,
  \end{aligned}
\end{equation*}
where $V(0):=\{\bw\in V:\ \skp{\divo\bw}{\eta}=0, \ \forall\,\eta\in
Y\}$. The existence of the associated pressure $\pi^m\in Q$ follows
then from the DeRham theorem and the inf-sup condition.

The situation for the space-time-discrete problem is similar: The existence
of the solution $(\bfumh,\pi^m_h)$ can be inferred in the following
way.  First, for $V_h(0)=\{\bw_h\in V_h:\ \skp{\divo\bw_h}{\eta_h}=0,
\ \forall\,\eta_h\in Y_h\}$ consider the following algorithm, given
$\bfu^{m-1}\in V_h(0)$ find $\bfum\in V_h(0)$ such that for all
$\bxi_h \in V(0)    $
\begin{equation}
  \label{eq:Pmh_iterate} \tag{$P^m_h$}
  \begin{aligned}
    \skp{d_t\bfumh}{\bxi_h}_h+\langle\bS(\bD\bfumh),\bD\bxi_h\rangle_h
    +b(\bu^{m-1}_h,\bfumh,\bxi_h)
    &=\langle\ff(t_m),\bxi_h\rangle_h\,.
  \end{aligned}
\end{equation}
The existence of a weak solution for~\eqref{eq:Pmh_iterate} follows
directly by applying the Brouwer's theorem, see
also~\cite[Lemma~7.1]{pr}. Uniqueness
follows from the semi-implicit expression for the convective term and
from the monotonicity of $\bS$. Moreover, the
following energy estimate holds true:
\begin{equation*}
  \max_{0\leq m\leq M}\|\bfumh\|^2_2+\ksum\|\bD\bfumh\|_p^p\leq C(\bfu_0,\bff),
\end{equation*}
which is obtained by using $\bfumh\in V_h(0)$ as test function.
Coming back to Problem~\eqref{eq:Qmh_iterate}, the existence of the
associated pressure $\pi^m_h\in Q_h$ such that $(\bfum_h,\pi^m_h) $ is
a solution of~\eqref{eq:Qmh_iterate} is derived from the previous
result of existence of a solution for $(P_h^m)$ and the inf-sup
condition. See also~\cite[Lemma~4.1]{bdr-phi-stokes} for such
inequality in the setting of Orlicz spaces.

\bigskip

\noindent The main result of this paper is the following error
estimate.
\begin{Theorem}
  \label{thm:main_thm}
  Let $\bS$ satisfy Assumption~\ref{ass:1} with
  $p\in\big(\frac{3}{2},2]$ and $\para \in [0,\para_0]$, where
  $\para_0>0$ is an arbitrary number. Let $ {\bff} \in {C(I;
    W^{1,2}(\Omega))} \cap {W^{1,2}(I; L^2(\Omega))}$, where
  $I=[0,T]$, for some $T>0$, and let ${\bfu_0} \in
  {W_{\divergence}^{2,2}(\Omega)}$ with $\divo \bS(\bD \bu_0) \in
  L^2(\Omega)$ be given. Let $\bfu $ be a strong
  solution of the (continuous) problem~\eqref{eq:pfluid} satisfying
  \begin{align}
    \label{eq:reg11}
    \begin{split}
      \norm{ \bfu_t}_{L^\infty(I; L^2(\Omega))} +
      \norm{\bF(\bD\bfu)}_{W^{1,2}(I\times\Omega)} +
      \norm{\bF(\bD\bfu)}_{L^{2\frac{5 p-6}{2-p}}(I;W^{1,2}(\Omega))}
      &\leq c_{2}.
    \end{split}
  \end{align}
  Let $\mathcal{T}_h$ be a triangulation as introduced in
  Sec.~\ref{sec:preliminaries} and let $(\bfumh,\pi^m_h)$ be the
  unique solution of the space-time-discrete problem~\eqref{eq:Qmh_iterate}
  corresponding to the data $(\Pidiv\bu_0,\bff)$.  Then, there exists
  a time-step $k_{1}>0$ and a mesh-size $h_{1}>0$ such that, if
  $\max\{h^{\frac{3p-2}{2}},h^{2}\}\leq c_{3}\, k$ for some $c_{3}>0$,
  for all $k \in (0,{k}_{1})$ and for all $h\in(0,h_{1})$, then
  the following error estimate holds true:
  \begin{equation*}
      \max_{0\leq m\leq M}\|\bfu(t_m)-\bfum_h\|^2_{2}+
      \ksum\|\bF(\bD\bfutm)-\bF(\bD\bfumh)\|^2_{2}\leq c_4\,
      (h^2+k^2).
  \end{equation*}
  The constants ${k}_{1},$ $h_{1}$, $c_3$ and $c_{4}$ depend only on
  $c_{2}$, the characteristics of $\bS$, and $|\Omega|$, but they are
  independent of $\para \in [0,\para_0]$.
\end{Theorem}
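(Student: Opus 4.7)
My plan is to decouple the space and time discretization errors via the triangle inequality
\[
\norm{\bfu(t_m)-\bfumh}_2 \le \norm{\bfu(t_m)-\bfum}_2 + \norm{\bfum-\bfumh}_2,
\]
and analogously for the natural distance $\norm{\bfF(\bD\bfu(t_m))-\bfF(\bD\bfumh)}_2$ (using Lemma~\ref{lem:quasinormtrick} and a splitting of $\bfF$-differences). The time-discrete contribution $\norm{\bfu(t_m)-\bfum}_2 + \norm{\bfF(\bD\bfu(t_m))-\bfF(\bD\bfum)}_2$ is already $\mathcal{O}(k)$ by Theorem~\ref{thm_regularity_discrete}, so the whole problem reduces to proving the spatial estimate
\[
\max_m \norm{\bfum-\bfumh}_2^2 + \ksum \norm{\bfF(\bD\bfum)-\bfF(\bD\bfumh)}_2^2 \le c\,h^2,
\]
under the hypothesis $\max\{h^{(3p-2)/2},h^2\} \le c_3\,k$. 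Note that the initial error $\norm{\bfu_0-\Pidiv\bfu_0}_2$ is already $\mathcal O(h)$ by Proposition~\ref{thm:ocont}, which supplies the starting step of the induction.

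\textbf{Error equation and monotonicity.} Set $\bfem:=\bfum-\bfumh$. Subtracting $(Q^m_h)$ from the weak form of $(Q^m)$ and splitting $\bfem=(\bfum-\Pidiv\bfum)+(\Pidiv\bfum-\bfumh)$, I would test the resulting identity with $\bfxi_h:=\Pidiv\bfum-\bfumh\in V_h$, which is genuinely discretely divergence-free thanks to the divergence-preserving property of $\Pidiv$ in Assumption~\ref{ass:proj-div}. The parabolic term produces $\tfrac12\,d_t\norm{\bfem}_2^2$ modulo a correction of order $h$ coming from $\Pidiv\bfum-\bfum$; the monotone term yields, after applying Lemma~\ref{lem:quasinormtrick} and the approximation bound of Proposition~\ref{thm:ocont}, a coercive contribution $c\,\norm{\bfF(\bD\bfum)-\bfF(\bD\bfumh)}_2^2$ minus a consistency term $c\,\norm{\bfF(\bD\bfum)-\bfF(\bD\Pidiv\bfum)}_2^2$ which is $\mathcal O(h^2)$ by the approximation properties of $\Pidiv$ together with the discrete regularity in~\eqref{eq:regularity_discrete_problem}.

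\textbf{Pressure and convective terms.} The pressure term $\skp{\divo\bfxi_h}{\pi^m}$ does not vanish because $\bfxi_h$ is only discretely divergence-free; I would insert $\PiY\pi^m$ and control the remainder $\skp{\divo\bfxi_h}{\pi^m-\PiY\pi^m}$ via Proposition~\ref{lem:PiYstab} in $L^{p'}$. The necessary $L^{p'}$-bound on $\pi^m$ is recovered from the equation $(Q^m)$ through the inf-sup condition and the crucial regularity $d_t\bfum \in l^{p'}(L^{p'})$ established in~\eqref{eq:regularity_time-discrete-derivative}; this is precisely where the restriction $p>3/2$ enters. The convective term $b(\bfu^{m-1},\bfum,\cdot)-b(\bfu^{m-1}_h,\bfumh,\cdot)$ decomposes into contributions from $\bfem^{m-1}$ and $\bfem$; by using the antisymmetry of $b$ and H\"older's inequality against the strong bounds in $l^\infty(W^{1,r})$ from~\eqref{eq:regularity_discrete_problem}, I would estimate these by $\epsilon\,\norm{\bfF(\bD\bfum)-\bfF(\bD\bfumh)}_2^2 + C\,\norm{\bfem}_2^2 + C\,\norm{\bfem^{m-1}}_2^2 + \mathcal O(h^2)$.

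\textbf{Conclusion via the generalized Gronwall lemma.} Collecting the estimates with $a_m:=\norm{\bfem}_2$ and $b_m:=\norm{\bD\bfem}_p$ (together with the degenerate weight $(\delta+\abs{\bD\bfum})^{(p-2)/2}$ that converts the $\bfF$-distance into a weighted $L^p$-norm), the resulting inequality fits into the structure of Lemma~\ref{lem:discrete_Gronwall_lemma}, with $r_m,s_m=\mathcal O(h)$. This yields the claim. The main obstacle I anticipate is controlling the bilinear form of the type $\skp{[\nabla \bfumh]\bfem^{m-1}}{\bfxi_h}$ in the singular $p<2$ regime without losing $h$-powers: this forces me to use the parabolic interpolation $\bfumh \in l^q(W^{2,3p/(p+1)})$ together with an inverse inequality on the discrete component, which in turn dictates the sharper coupling $h^{(3p-2)/2}\lesssim k$ and requires the stabilizing form~\eqref{discrete_Gronwall_ter} of the Gronwall lemma rather than its simpler variant~\eqref{discrete_Gronwall_bis}.
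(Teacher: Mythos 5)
Your overall strategy is the one the paper actually follows -- same error splitting $\bu(t_m)-\bfumh = (\bu(t_m)-\bfum)+\bfem$, same test function $\Pidiv\bfem=\Pidiv\bfum-\bfumh$, same choice $a_m=\norm{\bfem}_2$, $b_m=\norm{\bD\bfem}_p$ in Lemma~\ref{lem:discrete_Gronwall_lemma}. But two of your key mechanisms are not sound as stated, and both go to the heart of what makes this estimate work.

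First, the pressure term. You propose to bound $\skp{\divo\bfxi_h}{\pi^m-\PiY\pi^m}$ in $L^p\times L^{p'}$ using Proposition~\ref{lem:PiYstab}, with the needed $L^{p'}$-bound on $\pi^m$ pulled from the inf-sup condition. That bounds $\norm{\pi^m-\PiY\pi^m}_{p'}$ only by $c\norm{\pi^m}_{p'}$, which produces no power of $h$: to gain a factor $h$ you would need $\pi^m\in W^{1,p'}$ uniformly in $m$ and $\delta$, and the paper explicitly notes this regularity is an open problem (and simply false uniformly in the degenerate limit $\delta\to 0$). The actual mechanism (Lemma~\ref{lem:lemma4}) is genuinely different: one uses Young's inequality in the shifted Orlicz pair $(\phi_{|\bD\bfum|},\phi_{|\bD\bfum|}^*)$ and then, via the local analogue of \cite[Lemma~6.4]{bdr-phi-stokes}, replaces the pressure oscillation through the \emph{equation} by $h(\abs{\bff(t_m)}+\abs{d_t\bfum}+\abs{\bu^{m-1}}\,\abs{\nabla\bfum})$ together with an $\bfF$-oscillation term. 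This is where $d_t\bfum\in l^{p'}(L^{p'})$ (hence $p>3/2$) enters; but in the Orlicz setting, not through an $L^{p'}$ pressure bound. Without this step your argument stalls exactly where previous suboptimal results stalled.

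Second, the convective term and the $h$--$k$ coupling. You claim the convective part can be reduced to $\epsilon\norm{\bfF(\bD\bfum)-\bfF(\bD\bfumh)}_2^2+C\norm{\bfem}_2^2+C\norm{\bfem^{m-1}}_2^2+\mathcal{O}(h^2)$ and then attribute the coupling $h^{(3p-2)/2}\lesssim k$ to inverse inequalities applied to $\skp{[\nabla\bfumh]\bfem^{m-1}}{\bfxi_h}$. Neither matches what happens. The paper's Lemma~\ref{lem:lemma3} deliberately rearranges so that only the \emph{continuous-in-space} quantities $\bu^{m-1}$, $\bfum$ appear as coefficients, and it leaves the result in the form $\norm{\nabla\bR_h^m}_{3p/(p+1)}\norm{\bD\bfem}_p + \norm{\be^{m-1}}_2^\theta\norm{\bD\be^{m-1}}_p^{1-\theta}\norm{\bD\bfem}_p$, i.e. $r_m b_m + a_{m-1}^\theta b_{m-1}^{1-\theta} b_m$. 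This is precisely what the generalized Gronwall lemma is built to absorb (both \eqref{discrete_Gronwall_bis} and \eqref{discrete_Gronwall_ter} are needed, not \eqref{discrete_Gronwall_ter} ``rather than'' \eqref{discrete_Gronwall_bis}), and no inverse inequality is used. The coupling condition instead arises from two other places: the intrinsic requirement $h^2\lesssim k$ in Lemma~\ref{lem:discrete_Gronwall_lemma}, and the estimate of $k^{-1}\norm{\bR_h^m}_2^2$ (which contributes $h^{(5p-2)/p}/k\lesssim h^2$, i.e. $h^{(3p-2)/p}\lesssim k$, via $\norm{\bR_h^m}_2\le c\,h^{(5p-2)/(2p)}\norm{\nabla^2\bfum}_{3p/(p+1)}$), coming from Lemma~\ref{lem:lemma1}, not the convective term. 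So your proposal identifies the right skeleton but would fail at the pressure step and misidentifies the source of the coupling constraint.
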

\begin{remark}{\rm 
  A explained in~\cite{bdr-7-5-num}, in the space periodic setting we
  are able to obtain~\eqref{eq:reg11} starting from the assumptions on
  the data of the problem, at least in a small time interval $[0,T']$,
  see Thm.~\ref{thm_existence_strong}. On the other hand, the analysis
  performed below is correct also in the Dirichlet case, provided one
  can show the regularities \eqref{eq:reg11} and
  \eqref{eq:regularity_discrete_problem} for the continuous and
  time-discrete problem, respectively. 

  In the case of Dirichlet data, the semigroup approach of Bothe
  and Pr{\"u}ss~\cite{pruess} proves the existence
  and uniqueness of a strong solution in a small interval $[0,T']$
  for $p\ge 1$, under the hypothesis of smooth data and $\delta>0$.
  Note that their analysis does not ensure the regularity  of the time
  derivative stated in~\eqref{eq:reg11}. However, one can easily prove
  this property based on the results proved in \cite{pruess} by
  standard techniques. 

  Unfortunately the needed space regularity is still open even for
  steady problems with 
  $(p,\delta)$-structure in the Dirichlet case, and this would be the
  basis for the regularity of the time-discrete problem. For partial
  results in the steady case, see for instance Beir{\~a}o da
  Veiga~\cite{hugo-p-less2}.

  We also wish to point out that one of the main difficulties in the
  Dirichlet case is that of having estimate independent of $\delta$,
  which is one of the key points in our analysis also of the
  degenerate problem.}
\end{remark}
\section{Proof of the main result}
\label{sec:proof-main-result}
The proof of Thm.~\ref{thm:main_thm} is obtained by splitting the
numerical error as follows:
\begin{equation*}
  \bu(t_m)-\bfumh=\bu(t_m)-\bfum+\bfum-\bfumh=:\boldsymbol{\epsilon}^m+\bfem, 
\end{equation*}
For the error $\boldsymbol{\epsilon}^m$
Thm.~\ref{thm_regularity_discrete} ensures 
\begin{equation*}
  \max_{0\leq m\le M }\|
  \,\bfutm-\bfum\|^{2}_{2}+\ksum\|\bF(\bD\bfutm)-\bF(\bD\bfum)\|^2_{2} 
  \leq c\,k^{2}.   
\end{equation*}
Hence, we need to focus only on the second part of the error, namely
$\bfem$.

The main error estimate is obtained by taking the difference between
the equation satisfied by $\bfum$ and that for $\bfumh$, and using as
test function $\bfxi_h\in V_h\subset V$. In this way we obtain the
following error equation for all $\bfxi_h\in V_h$
\begin{equation}
  \label{eq:error_equation}
  \begin{aligned}
    \skp{d_t\bfem}{\bfxi_h}_h&+\skp{\bS(\bD\bfum)-\bS(\bD\bfumh)}{\bD\bfxi_h}_h
    +b(\bu^{m-1},\bfum,\bfxi_h) 
    \\
    &-b(\bu^{m-1}_h,\bfumh,\bfxi_h)-\skp{\divo
      \bfxi_h}{\pi^m-\pi^m_h}_h=0.
  \end{aligned}
\end{equation}
Clearly, a ``natural'' test function $\bxi_h$ to get the error
estimate would be $\bfem:=\bfum-\bfumh$, which cannot be used, since
it is not a discrete functions, that is $\bfem\not\in V_h$.  The error
estimate is then obtained by using as test function the projection
$\bfxi_h:=\Pidiv\bfem\in V_h$ and treating the various terms arising
from the following identity:
\begin{align*}
  \Pidiv\bfem=\Pidiv(\bfum-\bfumh)=\Pidiv\bfum-\bfumh&=
  \Pidiv\bfum-\bfum+\bfum-\bfumh\\
  &=:    \bR^m_h+\bfem,
\end{align*}
where we used that $\Pidiv=\textrm{id}$ on $V_h$.
Let us start from the first term from the left-hand side of the error
equation, that one concerning  the discrete time-derivative. We
have the following result
\begin{lemma}
  \label{lem:lemma1}
  The following estimate holds true 
  \begin{equation*}
    \frac{1}{2}d_t\|\bfem\|^2_2+\frac{k}{4}\|d_t\bfem\|^2_2-
    \frac{1}{k}\|\bR^m_h\|^2_2\leq
    \skp{d_t\bfem}{\Pidiv\bfem}_h.
  \end{equation*}
\end{lemma}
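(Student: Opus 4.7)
The statement follows from a direct expansion using the identity $\Pidiv\bfem = \bR^m_h + \bfem$ combined with the standard discrete algebraic identity for backward differences and Young's inequality. The plan has three short steps.

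First, I would split the right-hand side using the defining relation $\Pidiv\bfem = \bR^m_h + \bfem$, obtaining
\begin{equation*}
\skp{d_t\bfem}{\Pidiv\bfem}_h = \skp{d_t\bfem}{\bfem} + \skp{d_t\bfem}{\bR^m_h},
\end{equation*}
where I have dropped the subscript $h$ on the inner product since both factors are standard $L^2(\Omega)$ functions (the broken integral coincides with the usual one).

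Second, for the diagonal piece I would apply the well-known discrete identity that underlies the energy method for the implicit Euler scheme:
\begin{equation*}
\skp{d_t\bfem}{\bfem} = \frac{1}{2}d_t\|\bfem\|_2^2 + \frac{k}{2}\|d_t\bfem\|_2^2,
\end{equation*}
which follows by writing $\bfem = \bfem^{m-1} + k\, d_t\bfem$ and expanding, giving the usual ``midpoint + dissipative correction'' formula.

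Third, for the cross term I would apply Cauchy--Schwarz followed by Young's inequality in the form $ab \le \epsilon a^2 + \tfrac{1}{4\epsilon}b^2$ with the choice $\epsilon = k/4$:
\begin{equation*}
\skp{d_t\bfem}{\bR^m_h} \ge -\|d_t\bfem\|_2\|\bR^m_h\|_2 \ge -\frac{k}{4}\|d_t\bfem\|_2^2 - \frac{1}{k}\|\bR^m_h\|_2^2.
\end{equation*}
Combining the two estimates absorbs half of the dissipative $\tfrac{k}{2}\|d_t\bfem\|_2^2$ term into the cross-term bound, leaving exactly $\tfrac{k}{4}\|d_t\bfem\|_2^2$ on the left and $-\tfrac{1}{k}\|\bR^m_h\|_2^2$ on the right, which is the claimed inequality.

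There is no real obstacle here: the lemma is essentially algebraic, and the specific weights $k/4$ and $1/k$ are dictated by balancing Young's inequality against the $k/2$ produced by the discrete energy identity. The $\tfrac{1}{k}\|\bR^m_h\|_2^2$ penalty is harmless in the later Gronwall argument because $\bR^m_h = \Pidiv\bfum-\bfum$ is controlled by Proposition~\ref{thm:ocont} in terms of $h^2\|\nabla\bfum\|_2^2$, so that the sum $k\sum_m \tfrac{1}{k}\|\bR^m_h\|_2^2$ produces the desired $\mathcal{O}(h^2)$ contribution.
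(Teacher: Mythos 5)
Your proof is correct and coincides with the paper's own argument: both split $\skp{d_t\bfem}{\Pidiv\bfem}_h$ via $\Pidiv\bfem=\bR^m_h+\bfem$, apply the discrete energy identity $\skp{d_t\bfem}{\bfem}=\frac12 d_t\|\bfem\|_2^2+\frac{k}{2}\|d_t\bfem\|_2^2$, and close with Young's inequality with weight $k/4$ on the cross term. The algebra, the choice of $\epsilon=k/4$, and the resulting constants are identical to the paper's proof.
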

\begin{proof}
  By standard manipulations of the discrete
  time-derivative  we get 
  \begin{equation*}
    \begin{aligned}
      \skp{d_t \bfem}{\Pidiv\bfem}_h & = \skp{d_t \bfem}{\bfem}_h+
      \skp{d_t\bfem}{\bR^{m}_{h}}_h
      \\
      &=\frac{1}{2}d_t\|\bfem\|^2_2+\frac{k}{2}\|d_t\bfem\|^2_2+
      \skp{d_t\bfem}{\bR^{m}_{h}}_h. 
    \end{aligned}
  \end{equation*}
  Observe now that, by Young's inequality, we have
  \begin{equation*}
    \big|  \skp{d_t\bfem}{\bR^{m}_{h}}\big|
    \leq\frac{ k}{4}\|d_t\bfem\|^2_2+\frac{1}{k}\|\bR^m_h\|^2_2,
  \end{equation*}
  hence the statement.
\end{proof}

Next, we treat the second term from the left-hand side of the error
equation, that one related with the extra stress-tensor.
\begin{lemma}
  \label{lem:lemma2}
  There exists $c>0$, independent of $h$ and $\delta$, such that
  \begin{equation*}
    \begin{aligned}
      c\,\big(\|\bF(\bD\bfum)-\bF(\bD\bfumh)\|^2_2-&\|\bF(\bD\bfum)-\bF(\bD
      \Pidiv\bfum)\|^2_2\big)
      \\
      &\qquad \leq    \skp{\bS(\bD\bfum)-\bS(\bD\bfumh)}{\bD(\Pidiv\bfem)}_h.
    \end{aligned}
  \end{equation*}
\end{lemma}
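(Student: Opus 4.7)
The plan is to exploit the algebraic identity
$\Pidiv\bfem = \bfem - (\bfum - \Pidiv\bfum)$,
which after taking $\bD$ yields the splitting
\begin{equation*}
\bD(\Pidiv\bfem) = \big(\bD\bfum - \bD\bfumh\big) - \big(\bD\bfum - \bD\Pidiv\bfum\big).
\end{equation*}
Substituting into the right-hand side of the claim decomposes $\skp{\bS(\bD\bfum)-\bS(\bD\bfumh)}{\bD(\Pidiv\bfem)}_h$ into a coercive piece controlling $\bfem$, minus a projection-error piece that has to be absorbed.

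For the coercive piece $\skp{\bS(\bD\bfum)-\bS(\bD\bfumh)}{\bD\bfum - \bD\bfumh}_h$, the monotonicity estimate \eqref{eq:hammera} of Lemma~\ref{lem:hammer} immediately gives the lower bound $c_0\,\|\bF(\bD\bfum)-\bF(\bD\bfumh)\|_2^2$, with $c_0$ depending only on the characteristics of $\bS$.

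The delicate step, and the one on which the argument hinges, is the projection-error piece $\skp{\bS(\bD\bfum)-\bS(\bD\bfumh)}{\bD\bfum - \bD\Pidiv\bfum}_h$, which I would treat by invoking Lemma~\ref{lem:quasinormtrick}. The crucial point is the \emph{choice of arguments}: taking $\bu:=\bfumh$, $\bv:=\bfum$, $\bw:=\Pidiv\bfum$ makes the left-hand side of that lemma coincide, after two sign flips, with the error piece, and produces on the right-hand side precisely $\epsilon\,\|\bF(\bD\bfum)-\bF(\bD\bfumh)\|_2^2 + c_\epsilon\,\|\bF(\bD\bfum)-\bF(\bD\Pidiv\bfum)\|_2^2$. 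The naive assignment $\bu:=\bfum$, $\bv:=\bfumh$ would instead produce a shifted N-function anchored at $\abs{\bD\bfumh}$ rather than at $\abs{\bD\bfum}$, which is not controlled uniformly by the available regularity.

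Choosing $\epsilon := c_0/2$ and subtracting from the coercive estimate then produces the claim, where the generic $c$ in the statement is understood as two distinct multiplicative constants on the two $\bF$-squared terms (namely $c_0/2$ in front of the first and $c_{c_0/2}$ in front of the second). A purely technical point: $\skp{\cdot}{\cdot}_h$ coincides with the ordinary $L^2$-product here since all functions involved are globally defined on $\Omega$ and the piecewise summation recovers the full integral.
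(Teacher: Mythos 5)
Your proof is correct and follows essentially the same route as the paper: split $\bD(\Pidiv\bfem)$ into the coercive piece $\bD\bfum-\bD\bfumh$ and the projection error $\bD\Pidiv\bfum-\bD\bfum$, bound the first from below via \eqref{eq:hammera}, and absorb the second via Lemma~\ref{lem:quasinormtrick} with a small $\epsilon$. Your explicit remark about which slots of Lemma~\ref{lem:quasinormtrick} receive $\bfumh,\bfum,\Pidiv\bfum$ (so that the shift and the one-sidedness land correctly) is a careful reading that the paper leaves implicit, and the caveat that the two $\|\bF\ldots\|_2^2$ terms actually carry different constants matches how the lemma is used downstream.
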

\begin{proof}
  From  standard properties concerning the structure of $\bS$, as
  recalled in Sec.~\ref{sec:stress_tensor}, we get
  \begin{equation*}
    \begin{aligned}
      & \skp{\bS(\bD\bfum)-\bS(\bD\bfumh)}{\bD(\Pidiv\bfum-\bfumh)}_h
      \\
      &= \skp{\bS(\bD\bfum)-\bS(\bD\bfumh)}{\bD\bfum-\bD\bfumh}_h
      + \skp{\bS(\bD\bfum)-\bS(\bD\bfumh)}{\bD \Pidiv\bfum-\bD\bfum}_h
      \\
      &=\|\bF(\bD\bfum)-\bF(\bD\bfumh)\|^2_2+
      \skp{\bS(\bD\bfum)-\bS(\bD\bfumh)}{\bD( \Pidiv\bfum-\bfum)}_h.
    \end{aligned}
  \end{equation*}
  The latter term on the right-hand side can be estimated with the
  help of Lemma~\ref{lem:quasinormtrick} as
  follows
  \begin{equation*}
    \begin{aligned}
      &\big|\skp{\bS(\bD\bfum)-\bS(\bD\bfumh)}{\bD(
        \Pidiv\bfum-\bfum)}_h\big|
      \\
      &\qquad\leq\epsilon
      \|\bF(\bD\bfum)-\bF(\bD\bfumh)\|^2_2+c_\epsilon\|\bF(\bD\bfum)-\bF(\bD
      \Pidiv\bfum)\|^2_2,
    \end{aligned}
  \end{equation*}
  ending the proof.
\end{proof}

Some care is needed also to handle the two terms coming from the
convective term, which are estimated in the following lemma, by using
the regularity results for the solution $\bfum$ of the time-discrete
problem.
\begin{lemma}
  \label{lem:lemma3}  
  There exist $c>0$ and $\theta\in]0,1[$, not depending on $h$ and
  $\parameter$, such that
  \begin{equation}
    \label{eq:estimate_convective}
    \begin{aligned}
      &\big| b(\bu^{m-1},\bfum,\Pidiv\bfem)
      -b(\bu^{m-1}_h,\bfumh,\Pidiv\bfem)\big|
      \\
      &\quad \leq c\,\big( \|\nabla\bR_h^m\|_{\frac{3p}{3+1}}\|\bD\bfem\|_{p}+
      \|\be^{m-1}\|_{2}^\theta\|\bD\be^{m-1}\|_{p}^{1-\theta}\|\bD\bfem\|_p\big).
    \end{aligned}
  \end{equation}
\end{lemma}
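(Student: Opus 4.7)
My plan is to decompose the difference by trilinearity, eliminate the cubic-in-$\bfem$ term by antisymmetry of $b$, and bound the remaining four pieces via H\"older's inequality combined with Sobolev embeddings and Gagliardo--Nirenberg interpolation. First, writing $\bu^{m-1}_h=\bu^{m-1}-\be^{m-1}$ and $\bfumh=\bfum-\bfem$, trilinearity yields
\[
  b(\bu^{m-1},\bfum,\Pidiv\bfem)-b(\bu^{m-1}_h,\bfumh,\Pidiv\bfem)=b(\be^{m-1},\bfum,\Pidiv\bfem)+b(\bu^{m-1}_h,\bfem,\Pidiv\bfem).
\]
Using $\Pidiv\bfem=\bR^m_h+\bfem$ together with the antisymmetry $b(\bu^{m-1}_h,\bfem,\bfem)=0$, and further splitting $\bu^{m-1}_h=\bu^{m-1}-\be^{m-1}$ inside $b(\bu^{m-1}_h,\bfem,\bR^m_h)$, the right-hand side collapses to
\[
  b(\bu^{m-1},\bfem,\bR^m_h)+b(\be^{m-1},\bfum,\bfem)+b(\be^{m-1},\bfum,\bR^m_h)-b(\be^{m-1},\bfem,\bR^m_h).
\]

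Next I would estimate each piece. The first generates $\|\nabla\bR^m_h\|_{3p/(p+1)}\|\bD\bfem\|_p$: I use $\bu^{m-1}\in L^\infty(\Omega)$ (available from $\bfum\in l^\infty(I^M;W^{1,r}(\Omega))$ for $r<6(p-1)$ in Theorem~\ref{thm_regularity_discrete}, since $p>3/2$ gives $6(p-1)>3$), estimate $\|\bfem\|_{3p/(2p-1)}\lesssim\|\bD\bfem\|_p$ by Sobolev and Korn, and bound $\|\bR^m_h\|_{p'}\lesssim\|\nabla\bR^m_h\|_{3p/(p+1)}$ via the three-dimensional embedding $W^{1,3p/(p+1)}\hookrightarrow L^{3p}\hookrightarrow L^{p'}$. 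The second piece yields the bound with~$\theta$: using $\bfum\in L^\infty$ to pair $\nabla\bfem$ with $\be^{m-1}$, I would apply Gagliardo--Nirenberg,
\[
  \|\be^{m-1}\|_{p'}\lesssim\|\be^{m-1}\|_2^\theta\,\|\bD\be^{m-1}\|_p^{1-\theta},\qquad \theta=\frac{4(2p-3)}{5p-6}\in(0,1).
\]
The two residual pieces $b(\be^{m-1},\bfum,\bR^m_h)$ and $b(\be^{m-1},\bfem,\bR^m_h)$ are strictly higher order, each carrying an extra factor of $h$ from $\bR^m_h$ via Proposition~\ref{thm:ocont}, and would be absorbed into the two main terms after Young's inequality.

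The hard part will be the uniform-in-$p$ book-keeping of H\"older exponents. Both the Sobolev inclusion $L^{3p/(3-p)}\hookrightarrow L^{p'}$ used in the first piece and the Gagliardo--Nirenberg interpolation used in the second reduce to an equality exactly at $p=3/2$ (where the interpolation exponent $\theta$ degenerates to $0$); this is precisely the mechanism that enforces the restriction $p>3/2$ in Theorem~\ref{thm:main_thm}. Once the interpolation is in place, the rest is a routine combination of H\"older, Sobolev, and Young's inequalities together with the projection estimates from Proposition~\ref{thm:ocont}.
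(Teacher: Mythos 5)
Your trilinear decomposition is algebraically correct, but it produces a term that cannot be reconciled with the lemma's right-hand side. After expanding $\Pidiv\bfem=\bR^m_h+\bfem$ and using antisymmetry, one of your four pieces is $b(\be^{m-1},\bfum,\bR^m_h)$, and this term contains no $\bfem$ at all. Every summand on the right-hand side of~\eqref{eq:estimate_convective} carries a factor $\|\bD\bfem\|_p$, so no amount of H\"older, Sobolev, or Young can convert a bound of the form $c\,h\,\|\nabla\bfum\|\,\|\be^{m-1}\|$ into one of that shape; ``absorbing'' it via Young's inequality would require $\|\bD\bfem\|_p$ to already appear multiplicatively, which it does not. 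The claim that this piece is ``strictly higher order'' is therefore a mirage: it is higher order in $h$ but is of the wrong algebraic type for the lemma as stated, and in the subsequent Gronwall step it would have to be placed in the $s_m^2$ slot rather than the $b_m r_m$ slot, which is not what Lemma~\ref{lem:lemma3} asserts.

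The paper avoids this by a different insertion: it adds and subtracts $b(\bu^{m-1},\Pidiv\bfum,\Pidiv\bfem)$ and $b(\bu^{m-1}_h,\Pidiv\bfum,\Pidiv\bfem)$, so that the third argument is always $\Pidiv\bfem$ and the kill-term is $b(\bu^{m-1}_h,\Pidiv\bfem,\Pidiv\bfem)=0$, leaving only $b(\bu^{m-1},\bR^m_h,\Pidiv\bfem)$ and $b(\be^{m-1},\Pidiv\bfum,\Pidiv\bfem)$. Both carry $\Pidiv\bfem$, hence produce $\|\bD\bfem\|_p$ after Korn and the stability of $\Pidiv$. Your splitting instead places $\bR^m_h$ in the third slot, which destroys this structure. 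A second, smaller issue: for $b(\be^{m-1},\bfum,\bfem)$ you only treat the half $\skp{[\nabla\bfem]\be^{m-1}}{\bfum}_h$ where $\bfum\in L^\infty$ can be used directly; the other half $\skp{[\nabla\Pidiv\bfum]\be^{m-1}}{\Pidiv\bfem}_h$ is the delicate one, requiring $\|\nabla\bfum\|_r$ for $r$ close to (but strictly less than) $6(p-1)$ and a matching exponent $s_2<p'$, and this is precisely where $p>3/2$ is used in the paper's $I_{2,2}$. Your Gagliardo--Nirenberg exponent $\theta=\frac{4(2p-3)}{5p-6}$ is the correct one for the half you do treat, but the other half does not come for free.
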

\begin{proof}
  By adding and subtracting $b(\bu^{m-1},\Pidiv\bfum,\Pidiv\bfem)$
  and also in a second step $b(\bu^{m-1}_h,\Pidiv\bfum,\Pidiv\bfem)$
  and, by observing that $b(\bu^{m-1}_h,\Pidiv\bfem,\Pidiv\bfem)=0$,
  we get
  \begin{equation*}
    \begin{aligned}
      & b(\bu^{m-1},\bfum,\Pidiv\bfem) -b(\bu^{m-1}_h,\bfumh,\Pidiv\bfem)
      \\
      & = b(\bu^{m-1},\bfum-\Pidiv\bfum,\Pidiv\bfem)
      +b(\bu^{m-1}_{h},\Pidiv(\bfum-\bfum_{h}),\Pidiv\bfem)
      \\
      &\qquad+b(\bu^{m-1}-\bu^{m-1}_h,\Pidiv\bfum,\Pidiv\bfem)
      \\
      &
      =b(\bu^{m-1},\bR^{m}_{h},\Pidiv\bfem)+b(\be^{m-1},\Pidiv\bfum,\Pidiv\bfem)
      \\
      & =:I_1+I_2.
    \end{aligned}
  \end{equation*}
  Since $\divo\bu^{m-1}=0$ (in the continuous sense) the first term is
  estimated as follows, by using H{\"o}lder inequality
  \begin{equation*}
    \begin{aligned}
      I_1&=b(\bu^{m-1},\bR_h^m,\Pidiv\bfem)=\skp{[\nabla
        \bR_h^m]\bu^{m-1}}{\Pidiv\bfem}_{h} 
      \\
      &
      \leq\|\bu^{m-1}\|_{\frac{3p}{3p-4}}\|\nabla\bR_h^m\|_{\frac{3p}{p+1}}
      \|\Pidiv\bfem\|_{\frac{3p}{3-p}}
    \end{aligned}
  \end{equation*}
  provided that $p > \frac 43$. By a Sobolev embedding theorem, the
  Korn's inequality (valid in the case of functions vanishing at the
  boundary or with zero mean value), and by the continuity of the
  interpolation operator $\Pidiv$ (cf.~Prop.~\ref{thm:ocont},
  Rem.~\ref{rem:int}) we can write
  \begin{equation*}
    \|\Pidiv\bfem\|_{\frac{3p}{3-p}}\leq c\,\|\nabla
    \Pidiv\bfem\|_p\leq c\,\|\bD\bfem\|_{p}.
  \end{equation*}
  Thus we arrive at $I_1\le
  c\,\|\bu^{m-1}\|_\infty\|\nabla\bR_h^m\|_{\frac{3p}{3+1}}\|\bD\bfem\|_{p}$
  and now observe that, by using
  regularity~\eqref{eq:regularity_discrete_problem} of the solution
  $\bu^m$, this term is bounded by the first one from the right-hand
  side of~\eqref{eq:estimate_convective}.

  Concerning $I_2$, by using the definition of $b(\,.\,,\,.\,\,,.\,)$,
  we split it as follows
  \begin{equation*}  
    I_2=I_{2,1}+I_{2,2}:=\frac{1}{2}\skp{[\nabla\Pidiv\bfem]\be^{m-1}}{\Pidiv\bfum}_h-
    \frac{1}{2}\skp{[\nabla\Pidiv\bfum]\be^{m-1}}{\Pidiv\bfem}_{h}.
  \end{equation*}
  We estimate $I_{2,1}$ with the H{\"o}lder inequality:
  \begin{equation*}
    I_{2,1}\leq
    c\,\|\Pidiv\bfum\|_{\alpha}\|\be^{m-1}\|_{s_{1}}\|\nabla\Pidiv\bfem\|_p, 
  \end{equation*}
  for some $s_{1}\in (p',p^*)=\big (\frac{p}{p-1},\frac{3p}{3-p}\big )$ and
  $\alpha=\frac{ps_{1}}{ps_{1}-s_{1}-p}<\infty$. We have that
  $2\leq p'$, hence, by standard
  convex interpolation with $\theta\in (0,1)$ such that
  $\frac{1}{s_{1}}=\frac{\theta}{2}+\frac{(1-\theta)}{p^{*}}$, by the
  properties of $\Pidiv$ (cf.~Prop.~\ref{thm:ocont},
  Rem.~\ref{rem:int}), by Korn's inequality, and
  since~\eqref{eq:regularity_discrete_problem} implies
  $\|\bfum\|_\alpha \le c\,\|\bfum\|_{{\infty}}\in l^{\infty}(I_M)$, we obtain that
  \begin{equation*}
    \begin{aligned}
      I_{2,1}&\leq c\,\|\bfum\|_{\alpha}\|\be^{m-1}\|_{2}^\theta
      \|\bD\be^{m-1}\|_{p}^{1-\theta}\|\nabla\Pidiv\bfem\|_p
      \\
      &\leq
      c\,\|\be^{m-1}\|_{2}^\theta\|\bD\be^{m-1}\|_{p}^{1-\theta}\|\bD\bfem\|_p.
    \end{aligned}
  \end{equation*}
For the term $I_{2,2}$ we have, by H{\"o}lder inequality
\begin{equation*}
    I_{2,2}\leq\frac{1}{2}\|\nabla \Pidiv\bfum\|_{r}\|\be^{m-1}\|_{s_{2}}\|
    \Pidiv\bfem\|_{\frac{3p}{3-p}},
\end{equation*}
for some $1<r<6(p-1)$ and $s_{2}=\frac{r p^{*}}{r p^{*}-r-p^{*}}$. A
straightforward computation shows that for any $\frac{3}{2}<p\leq2$
one can choose $r$ close enough to $6(p-1)$ in such a way that
$s_{2}<\frac{p}{p-1}<s_{1}$. Hence, by using again the properties of
the interpolation operator (cf.~Prop.~\ref{thm:ocont},
  Rem.~\ref{rem:int}), since
by~\eqref{eq:regularity_discrete_problem} we have that
$\|\nabla \bfum\|_{r}\in l^{\infty}(I_M)$ for all $r<6(p-1)$, by H{\"o}lder
and Korn's inequality, and by the embedding $L^{s_{1}}(\Omega)\subset
L^{s_{2}}(\Omega)$, we get
\begin{equation*}
    I_{2,2}\leq c\,\|\be^{m-1}\|_{s_{1}}\|\bD\bfem\|_{p}.
\end{equation*}
Thus the right-hand can be estimated as $I_{2,1}$, which completes the proof. 
\end{proof}

\medskip

Now we need to estimate the term involving the pressure which can be
handled by using the same approach as in~\cite{bdr-phi-stokes}. Note
that the regularity for the gradient of the pressure represents an
outstanding open problem, with only partial results. In fact, at
present, for the time evolution (either continuous or discrete) there
are only results which exclude the degenerate case $\delta=0$,
see~\cite{bdr-7-5-num,bdr-7-5}.  Let us now show how the last term
in~\eqref{eq:error_equation} is estimated only in terms of the
external force and of the velocity, by using once again the equations,
as done in~\cite{bdr-phi-stokes}.
\begin{lemma}
  \label{lem:lemma4}
  For each $\epsilon>0$ there exists $c_{\epsilon}>0$, not depending on $h$ and
  $\parameter$, such that
  \begin{equation*}
    \begin{aligned}
    &  \big| \skp {\divo \Pidiv\bfem}{\pi^m-\pi^m_h}_{h}\big|
      \\
      &\qquad \leq c\, \sum_K \int_K
      \big(\phi_{|\bD\bfu^m|}\big)^*\big(h|\bff(t_m)|+h|d_t\bfum|+ 
      h|\bu^{m-1}|\,       |\nabla\bfum|\big)\,dx
      \\
      &\qquad \quad + c\, \sum_K\int_{S_K} \!\!\abs{\bF(\bD\bfum)-\mean{\bF(\bfD
          \bfum)}_{S_K}}^2\,dx
      \\
      &\qquad  \quad +\epsilon\,\Big(\,\norm{ \bfF( \bfD\bfum)-\bfF(\bfD
        \Pidiv\bfum)}^2_2 +\norm{\bfF(\bfD \bfum) - \bfF(\bfD
        \bfumh)}^2_2 \Big).
    \end{aligned}
  \end{equation*}
\end{lemma}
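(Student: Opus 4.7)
The plan is to reduce the pressure-error term to a patch-local approximation error and then invoke the continuous momentum equation to trade $\pi^m-\mean{\pi^m}_{S_K}$ for $\ff(t_m),d_t\bfum,[\nabla\bfum]\bu^{m-1}$ and the local oscillation of the extra stress. First I would use the divergence-preservation in Assumption~\ref{ass:proj-div}(1). Since $\PiY\pi^m-\pi^m_h\in Y_h$,
\begin{equation*}
\skp{\divo\Pidiv\bfem}{\PiY\pi^m-\pi^m_h}_h=\skp{\divo\bfem}{\PiY\pi^m-\pi^m_h}_h,
\end{equation*}
and this vanishes because $\divo\bfum=0$ pointwise, while the discrete constraint $\skp{\divo\bfumh}{q_h}_h=0$ for $q_h\in Q_h$ together with the periodicity of $\bfumh$ (which forces $\int_\Omega\divo\bfumh\,dx=0$) eliminate the zero-mean and the constant parts of $\PiY\pi^m-\pi^m_h$ separately. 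Consequently
\begin{equation*}
\skp{\divo\Pidiv\bfem}{\pi^m-\pi^m_h}_h=\sum_K\int_K\divo\Pidiv\bfem\,(\pi^m-\PiY\pi^m)\,dx.
\end{equation*}

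Next, on each $K$ I would apply Young's inequality~\eqref{eq:ineq:young} pointwise with the shift $|\bD\bfum(x)|$:
\begin{equation*}
|\divo\Pidiv\bfem|\,|\pi^m-\PiY\pi^m|\le\epsilon\,\phi_{|\bD\bfum|}(|\bD\Pidiv\bfem|)+c_\epsilon\,(\phi_{|\bD\bfum|})^*(|\pi^m-\PiY\pi^m|).
\end{equation*}
Decomposing $\bD\Pidiv\bfem=(\bD\Pidiv\bfum-\bD\bfum)+(\bD\bfum-\bD\bfumh)$, the shifted-triangle inequality for $\phi_a$ and Lemma~\ref{lem:hammer} convert the first summand, after integration in $K$, into the two absorbable terms on the right-hand side, namely $\epsilon(\|\bF(\bD\bfum)-\bF(\bD\Pidiv\bfum)\|_2^2+\|\bF(\bD\bfum)-\bF(\bD\bfumh)\|_2^2)$.

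The core of the proof is the second summand. Because $\setR\subset Y_h$ and $\PiY$ is a projection, $\PiY c=c$; taking $c_K:=\mean{\pi^m}_{S_K}$ one writes $\pi^m-\PiY\pi^m=(\pi^m-c_K)-\PiY(\pi^m-c_K)$, and a shifted-Orlicz version of Proposition~\ref{lem:PiYstab} reduces the bound to $\sum_K\int_{S_K}(\phi_{|\bD\bfum|})^*(|\pi^m-c_K|)\,dx$. On the patch $S_K$ I would then use duality with a Bogovski corrector $\Bog g\in W^{1,p}_0(S_K)$: for $g$ with $\int_{S_K}g\,dx=0$,
\begin{equation*}
\int_{S_K}(\pi^m-c_K)\,g\,dx=-\int_{S_K}\nabla\pi^m\cdot\Bog g\,dx,
\end{equation*}
and substituting the continuous momentum equation
\begin{equation*}
\nabla\pi^m=\ff(t_m)-d_t\bfum+\divo\bS(\bD\bfum)-[\nabla\bfum]\bu^{m-1},
\end{equation*}
the stress term is integrated by parts, using that $\Bog g$ vanishes on $\partial S_K$ so that $\mean{\bS(\bD\bfum)}_{S_K}$ may be subtracted at no cost. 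The three force-like summands, scaled by $h_K$ coming from the Bogovski bound $\|\Bog g\|_{L^p(S_K)}\le c\,h_K\|g\|_{L^p(S_K)}$, give after applying $(\phi_{|\bD\bfum|})^*$ and summing in $K$ the first line of the claim, while the stress contribution converts via~\eqref{eq:hammere} and Lemma~\ref{lem:hammer} into the $\bF$-oscillation term $\sum_K\int_{S_K}|\bF(\bD\bfum)-\mean{\bF(\bD\bfum)}_{S_K}|^2\,dx$.

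The main technical obstacle is this last step: running the Bogovski/Poincar\'e duality in the shifted-Orlicz framework $(\phi_{|\bD\bfum|},(\phi_{|\bD\bfum|})^*)$ uniformly in the degeneracy parameter $\delta\in[0,\delta_0]$ and in the mesh, together with the conversion of oscillations of $\bS(\bD\bfum)$ into oscillations of $\bF(\bD\bfum)$. These ingredients are precisely those developed for the stationary Stokes problem in~\cite{bdr-phi-stokes}, and they transfer to the present setting because $\ff(t_m),d_t\bfum$ and $[\nabla\bfum]\bu^{m-1}$ enter the momentum equation as a generic right-hand side and can be carried through the same argument without modification.
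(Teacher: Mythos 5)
Your argument is essentially the paper's own: you replace $\pi^m_h$ by $\PiY\pi^m$ inside the pairing using the divergence-preservation of $\Pidiv$ together with the discrete mass constraint, apply the shifted Young inequality (with shift $\abs{\bD\bfum}$) and Lemma~\ref{lem:hammer} to produce the two absorbable $\bF$-terms, and then reduce $\int_K(\phi_{\abs{\bD\bfum}})^*(\abs{\pi^m-\PiY\pi^m})\,dx$ to the force term and the $\bF$-oscillation term. The paper delegates this last reduction to \cite[Lemma~6.4]{bdr-phi-stokes}, and the mechanism you sketch (subtraction of $\mean{\pi^m}_{S_K}$ via the shifted-Orlicz stability of $\PiY$, Bogovskii duality on $S_K$, and substitution of $\nabla\pi^m$ from the time-discrete momentum equation~\eqref{m_iterate}, with $d_t\bfum$ and $[\nabla\bfum]\bu^{m-1}$ carried along as extra right-hand-side terms) is precisely that lemma's argument.
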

\begin{proof}
  We start by observing that for all $\bfxi_h\in V_h(0)$ we have
  \begin{equation*}
    \skp{\divo\bfxi_h}{\pi^m-\pi^m_h}_{h}=\skp{\divo
      \bfxi_h}{\pi^m-\eta^m_h}_{h}\qquad\forall\, \eta^m_h\in Y_h.
  \end{equation*}
  Then, if we use (in the same way as
  in~\cite[Lemma~3.1]{bdr-phi-stokes}) the divergence-preserving
  projection operator $\Pidiv$, we can estimate the term
  involving the pressure in the error equation as follows: For each
  $\eta^m_h\in Y_h$ it holds
  \begin{equation*}
    \begin{aligned}
      & \abs{\skp{\divo\Pidiv\bfem}{\pi^m-\pi^m_h}_{h}}=
      \abs{\skp{\divo(\Pidiv\bfum-\bfumh)}{\pi^m-\eta^m_h}_{h}}
      \\
      & \quad\leq \int_\Omega \abs{\bD \Pidiv\bfum -
        \bD\bfum+\bD\bfum-\bD\bfumh}\, \abs{\pi^m- \eta^m_h}\,dx
      \\
      & \quad\leq \epsilon\int_\Omega \phi_{|\mathbf{Du}^{m}|}(\abs{\bD
        \Pidiv\bfum - \bD\bfum}) + \phi_{|\mathbf{Du}^m|}(\abs{\bfD
        \bfum- \bfD\bfumh})\,dx
      \\
      &\hspace{1cm}  \quad+ c_\epsilon
      \int_\Omega(\phi_{|\mathbf{Du}^{m}|})^\ast
      (\abs{\pi^m-\eta^m_h})\,dx
      \\
      & \quad\leq \epsilon\,c\,\Big(\,\norm{ \bfF( \bfD\bfum)-\bfF(\bfD
        \Pidiv\bfum)}^2_2 +\norm{\bfF(\bfD \bfum) - \bfF(\bfD
        \bfumh)}^2_2 \Big)
      \\
      &\hspace{1cm} \quad +c_\epsilon\int_\Omega
      (\phi_{|\mathbf{Du}^{m}|})^\ast (\abs{\pi^m- \eta^m_h})\,dx.
    \end{aligned}
  \end{equation*}
  In particular we can choose $\eta_h^m=\PiY\pi^m$. By using also
  Assumption~\ref{ass:PiY} the latter term is estimated by using the
  same techniques as in~\cite[Lemma~6.4]{bdr-phi-stokes} as follows:
  \begin{equation*}
    \begin{aligned}
      \int_K \!(\phi_{\abs{\bfD \bfv}})^\ast (\abs{\pi^m- \PiY
        \pi^m})\,dx&\leq c\! \int_K \!\!\big(\phi_{|\bD
        \bv|}\big)^*(h|\bff(t_m)|+h|d_t\bfum|+h|\bu^{m-1}|\,
      |\nabla\bfum|)\,dx
      \\
      &\quad+ c \int_{S_K} \!\!\abs{\bF(\bD\bv)-\mean{\bF(\bfD
          \bfv)}_{S_K}}^2\,dx.
    \end{aligned}
  \end{equation*}
Finally,  summing over $K\in \mathcal{T}_h$ we get the assertion.
\end{proof}

\medskip 

By collecting the above results we can now prove the main result of
the paper.

\begin{proof}[{\it of Theorem~\ref{thm:main_thm}}]
  By gathering the results from
  Lemmas~\ref{lem:lemma1}-\ref{lem:lemma4} we get the following
  discrete inequality: exists $c>0$, independent of $\delta$ and $h$,
  and $\theta\in (0,1)$ such that
  \begin{equation*}
    \begin{aligned}
      d_t\|\bfem\|^2_2+&k\|d_t\bfem\|^2_2+\|\bF(\bD\bfum)-\bF(\bD\bfumh)\|^2_2+
      (\parameter+\|\bD\bfem\|_p)^{p-2}\|\bD\bfem\|_p^2
      \\
      &\leq c\, \Big[\,\frac{1}{k}\|\bR^m_h\|^2_2+\|\bF(\bD\bfum)-\bF(\bD
      \Pidiv\bfum)\|^2_2+\|\nabla
      \bR_h^m\|_{\frac{3p}{3+1}}\|\bD\bfem\|_{p}
      \\
      &\quad+\|\be^{m-1}\|_{2}^\theta\|\bD\be^{m-1}\|_{p}^{1-\theta}\|\bD\bfem\|_p
      \\
      &\quad+ \sum_K \int_K \!\!\big(\phi_{|\bD
        \bv|}\big)^*(h|\bff(t_m)|+h|d_t\bfum|+h|\bu^{m-1}|\,
      |\nabla\bfum|)\,dx
      \\
      &\quad+ \sum_K\int_{S_K} \!\!\abs{\bF(\bD\bfum)-\mean{\bF(\bfD
          \bfum)}_{S_K}}^2\,dx\Big],
  \end{aligned}
\end{equation*}
and by using a Sobolev embedding we can also obtain the following
bound
\begin{equation*}
  \|\be^{m-1}\|_{2}^\theta\|\bD\be^{m-1}\|_{p}^{1-\theta}\|\bD\bfem\|_p\leq
  c\,\|\bD\be^{m-1}\|_{p}\|\bD\bfem\|_p.
\end{equation*}
With this observation and by setting
\begin{equation*}
  \begin{aligned}
     a_m&:=\|\bfem\|_2,
    \\
    b_m&:=\|\bD\bfem\|_p,
    \\
    r_m&:=\|\nabla\bR_h^m\|_{\frac{3p}{p+1}},
    \\
    s_m^2&:= \|\bF(\bD\bfum)-\bF(\bD \Pidiv\bfum)\|^2_2+\sum_K
    \int_{S_K} \abs{\bF(\bD\bfum)-\mean{\bF(\bfD
        \bfum)}_{S_K}}^2\,dx
\\
&\qquad +  \sum_K \int_K \!\!\big(\phi_{|\bD
        \bv|}\big)^*(h|\bff(t_m)|+h|d_t\bfum|+h|\bu^{m-1}|\,
      |\nabla\bfum|)\,dx  +\frac{\|\bR^m_h\|_2^2}{k},
  \end{aligned}
\end{equation*}
we have that the two inequalities \eqref{discrete_Gronwall_bis},
\eqref{discrete_Gronwall_ter} are satisfied.  Hence, in order to apply
Lemma~\ref{lem:discrete_Gronwall_lemma}, we need just to verify the
hypotheses on the initial values $a_{0},b_{0}$ and on $r_{m}$ and
$s_{m}$.

To this end, first we observe that $\bfe^0=\bu_0-\Pidiv \bu_0$.  By
using the assumption $\bu_0\in W^{2,2}_{\divo}$, by the properties of
the interpolation operator $\Pidiv$, and due to $p\leq2$ we obtain:
\begin{equation}
  \label{eq:initial_data}
  \|\bfe^0\|_{2}\leq c\, h^2\qquad
  \text{and}\qquad\|\bD\bfe^0\|_{p}\leq c\,h
\end{equation}
We now check the hypotheses needed on $r_{m}$ and we observe, that if
$\bfum\in W^{2,\frac{3p}{p+1}}(\Omega)$, then
\begin{equation*}
  \|\nabla\bR_h^m\|_{\frac{3p}{p+1}}\leq
  c\,h\,\|\nabla^2\bfum\|_{\frac{3p}{p+1}}, 
\end{equation*}
by the properties of the interpolation operator (cf.~Prop.~\ref{thm:ocont},
  Rem.~\ref{rem:int}). Hence, under the
assumptions of regularity of $\bfum$, we also obtain that
\begin{equation*}
  \ksum   \|\nabla\bR_h^m\|_{\frac{3p}{p+1}}^2\leq c\,h^2,
\end{equation*}
for some constant $c$ independent of $\parameter$ and $h$. 

Let us now consider $s_{m}$ and we recall that if $\bF(\bD\bfum)\in
W^{1,2}(\Omega)$, then uniformly with respect to $K\in\mathcal{T}_h$
(cf.~\cite[Thm~3.7,Thm~5.1]{bdr-phi-stokes})
\begin{equation*}
  \begin{aligned}
    \|\bF(\bD\bfum)-\bF(\bD \Pidiv\bfum)\|^2_2&\leq \sum_K \int_{S_K}
    \abs{\bF(\bD\bfum)-\mean{\bF(\bfD \bfum)}_{S_K}}^2\,dx
    \\
    &\leq c\,h^2\|\nabla \bF(\bD\bfum)\|^2_2.
  \end{aligned}
\end{equation*}
We now estimate the third term in the definition of $s_{m}^{2}$ by
defining the following non-negative sequence $\{g^m\}_m$
\begin{equation*}
  g^m:=|\bff(t_m)|+|d_t\bfum|+|\bu^{m-1}|\,  |\nabla\bfum|.
\end{equation*}
By Young's inequality and by using the following inequality for
$\varphi$ defined in~\eqref{eq:5}
 \begin{equation*}
   (\phi_{a})^{*}(\kappa \,t)\leq c\,\kappa^{2}\,(\phi_{a})^{*}(t)\quad 
 \end{equation*}
 valid for $\kappa \in [0,\kappa_0]$ and $p\le 2$ with a constant $c$
 independent of $\delta,a$, and $t$ (cf.~\cite{bdr-phi-stokes}), we
 have
\begin{equation*}
  \begin{aligned}
    \sum_K \int_{S_K} \big(\phi_{|\mathbf{Du}^{m}|}\big)^*(h\, g^m)\,dx &\leq
    c\,h^2\sum_{K} \int_{S_K} \big(\phi_{|\mathbf{Du}^{m}|}\big)^*(g^m)\,dx
    \\
    & \leq
    c\,h^2\sum_K\int_{S_K} \phi(|\bD\bfum|)+\phi^*(g^m)\,dx.
  \end{aligned}
\end{equation*}
Pointing out that 
\begin{equation*}
  \ksum  \int_\Omega\phi^*(g^m)\,dx\leq
  \ksum\|g^m\|_{p'}^{p'}+(\parameter|\Omega|)^{p'}, 
\end{equation*}
we need just to check that $g^m\in l^{p'}(I^M)$. This follows by
interpolation from Thm.~\ref{thm_regularity_discrete}, and especially
from the observation in~\eqref{eq:regularity_time-discrete-derivative}

To conclude we need also to estimate the term $k^{-1}\|\bR_h^m\|^2_2$.
There is another (we also have one in the discrete Gronwall
Lemma~\ref{lem:discrete_Gronwall_lemma}) $h$-$k$ coupling that enters
the proof at this point. In fact, by Sobolev embedding, the standard
properties of interpolation operators in Sobolev space (see
e.g.~\cite[Thm.~3.1.6]{ciarlet}), and the assumptions on
$\mathcal{T}_h$ we get $ \|\bR_h^m\|_2\leq
c\,h^{\frac{5p-2}{2p}}\|\nabla^2 \bfum\|_{\frac{3p}{p+1}}$. Then, by
using the regularity on $\bfum$ from
Thm.~\ref{thm_regularity_discrete} we obtain
\begin{equation*}
  \ksum\frac{\|\bR^m_h\|_2^2}{k}\leq \frac{h^\frac{5p-2}{p}}{k}\ksum \|\nabla^2
  \bfum\|_{\frac{3p}{p+1}}^2\leq c \frac{h^\frac{5p-2}{p}}{k},
\end{equation*}
and if $h^{\frac{3p-2}{p}}\leq c\,k$, then
\begin{equation*}
  \ksum\frac{\|\bR^m_h\|_2^2}{k}\leq c\,h^{2}
\end{equation*}
This coupling between $k$ and $h$ derives from the natural regularity
of the problem, which is at the moment at disposal under rather
general assumptions on the data. We believe that this condition,
appearing also in simpler parabolic problems with
$p$-structure~\cite{der}, is only of technical character.

Then, by collecting all the previous estimate, we obtain that all the
hypotheses of Lemma~\ref{lem:discrete_Gronwall_lemma} are satisfied,
hence we end the proof.
\end{proof}

\section*{Acknowledgment}
L.Berselli would like to thank the SFB/TR 71
"Geometric Partial Differential Equations" for the hospitality during
his stays In Freiburg. L.Diening and M.\Ruzicka{} have been supported by by the
project C2 of the SFB/TR 71 "Geometric Partial Differential
Equations".

\def\cprime{$'$} \def\cprime{$'$} \def\cprime{$'$}
  \def\polhk#1{\setbox0=\hbox{#1}{\ooalign{\hidewidth
  \lower1.5ex\hbox{`}\hidewidth\crcr\unhbox0}}}
  \def\ocirc#1{\ifmmode\setbox0=\hbox{$#1$}\dimen0=\ht0 \advance\dimen0
  by1pt\rlap{\hbox to\wd0{\hss\raise\dimen0
  \hbox{\hskip.2em$\scriptscriptstyle\circ$}\hss}}#1\else {\accent"17 #1}\fi}
  \def\ocirc#1{\ifmmode\setbox0=\hbox{$#1$}\dimen0=\ht0 \advance\dimen0
  by1pt\rlap{\hbox to\wd0{\hss\raise\dimen0
  \hbox{\hskip.2em$\scriptscriptstyle\circ$}\hss}}#1\else {\accent"17 #1}\fi}
  \def\ocirc#1{\ifmmode\setbox0=\hbox{$#1$}\dimen0=\ht0 \advance\dimen0
  by1pt\rlap{\hbox to\wd0{\hss\raise\dimen0
  \hbox{\hskip.2em$\scriptscriptstyle\circ$}\hss}}#1\else {\accent"17 #1}\fi}
  \def\ocirc#1{\ifmmode\setbox0=\hbox{$#1$}\dimen0=\ht0 \advance\dimen0
  by1pt\rlap{\hbox to\wd0{\hss\raise\dimen0
  \hbox{\hskip.2em$\scriptscriptstyle\circ$}\hss}}#1\else {\accent"17 #1}\fi}
  \def\cprime{$'$}

\end{document}